\documentclass[10pt]{amsart}

\usepackage[T1]{fontenc}
\usepackage[english]{babel}
\usepackage{csquotes}

\usepackage[a4paper,margin=3cm]{geometry}
\allowdisplaybreaks
\usepackage{graphicx}
\usepackage[dvipsnames]{xcolor}
\usepackage{caption}
\usepackage{subcaption}
\usepackage{tikz}
\usetikzlibrary{
  babel,
  positioning,
  arrows.meta
}
\usepackage{tikz-cd}
\usepackage{pgfplots}
\pgfplotsset{compat=1.18}

\usepackage{mathtools}
\usepackage{amssymb}
\usepackage{esint}
\usepackage{mathrsfs}
\usepackage{faktor}
\usepackage{dsfont}

\usepackage[shortlabels]{enumitem}
\usepackage{array}
\usepackage{hhline}
\usepackage[normalem]{ulem}
\usepackage{comment}
\usepackage[toc,page]{appendix}
\usepackage{imakeidx}
\usepackage{fancyhdr}
\usepackage{ifthen}
\usepackage{forloop}
\usepackage{xstring}
\usepackage{emptypage}
\usepackage{listings}

\usepackage[
  backend=biber,
  style=numeric,
  sorting=nty,
  giveninits=true,
  maxnames=10,
  doi=false,
  isbn=false,
  url=false,
  language=auto,
  autolang=other
]{biblatex}

\usepackage[
  hyperfootnotes=false
]{hyperref}

\hypersetup{
  colorlinks=true,
  linkcolor=blue,
  citecolor=blue,
  urlcolor=red
}

\usepackage{aliascnt}
\usepackage[
  nameinlink,
  capitalise,
  sort
]{cleveref}

\crefname{equation}{}{}
\Crefname{equation}{}{}
\creflabelformat{equation}{#2(#1)#3}
\crefname{enumi}{}{}
\Crefname{enumi}{}{}

\newcommand*{\newsharedthm}[2]{%
  \newaliascnt{#1}{lemma}%
  \newtheorem{#1}[#1]{#2}%
  \aliascntresetthe{#1}%
}

\theoremstyle{plain}
\newtheorem{lemma}{Lemma}[section]
\newsharedthm{theorem}{Theorem}
\newsharedthm{proposition}{Proposition}
\newsharedthm{corollary}{Corollary}

\theoremstyle{definition}
\newsharedthm{definition}{Definition}
\newsharedthm{conjecture}{Conjecture}
\newsharedthm{claim}{Claim}
\newsharedthm{assumption}{Assumption}

\theoremstyle{remark}
\newsharedthm{remark}{Remark}
\newsharedthm{example}{Example}
\newsharedthm{notation}{Notation}

\newcommand*{\setcrefname}[3]{%
  \crefname{#1}{#2}{#3}%
  \Crefname{#1}{#2}{#3}%
}

\setcrefname{lemma}{Lemma}{Lemmas}
\setcrefname{theorem}{Theorem}{Theorems}
\setcrefname{proposition}{Proposition}{Propositions}
\setcrefname{corollary}{Corollary}{Corollaries}
\setcrefname{definition}{Definition}{Definitions}
\setcrefname{conjecture}{Conjecture}{Conjectures}
\setcrefname{claim}{Claim}{Claims}
\setcrefname{assumption}{Assumption}{Assumptions}
\setcrefname{remark}{Remark}{Remarks}
\setcrefname{example}{Example}{Examples}
\setcrefname{notation}{Notation}{Notations}

\newlist{thmenum}{enumerate}{1}
\setlist[thmenum]{
  label=(\roman*),
  ref=\thetheorem(\roman*)
}
\setcrefname{thmenumi}{Theorem}{Theorems}

\numberwithin{equation}{section}

\newcommand{\R}{\mathbb{R}}
\newcommand{\C}{\operatorname{C}}
\renewcommand{\H}{\operatorname{H}}
\newcommand{\B}{\operatorname{B}}
\newcommand{\supp}{\operatorname{supp}}
\newcommand{\PP}{\mathbb{P}}
\DeclareRobustCommand{\Bdot}{\dot{\B}\protect{\vphantom{B}}}
\DeclareRobustCommand{\Cdot}{\dot{\C}\protect{\vphantom{C}}}
\DeclareRobustCommand{\Hdot}{\dot{\H}\protect{\vphantom{H}}}

\begin{document}

\title[Liouville theorem for 2D stationary fractional Navier--Stokes]{A Liouville theorem for the two-dimensional stationary hypodissipative Navier--Stokes system}

\keywords{Liouville theorem, steady Navier--Stokes equations, hypodissipative fluids, fractional dissipation, 2D incompressible flow}

\subjclass[2020]{%
  35Q30,    %% Navier–Stokes equations
  76D03,    %% Incompressible viscous fluids: Existence, uniqueness, and regularity theory
  76D05,    %% Navier–Stokes equations for incompressible viscous fluids
  35B53.    %% Liouville theorems and Phragmén–Lindelöf theorems
}

\author[N.~De Nitti]{Nicola De Nitti}
\address[N.~De Nitti]{Politecnico di Bari, Dipartimento di Meccanica, Matematica e Management, Via E.~Orabona 4, 70125 Bari, Italy.}
\email[]{nicola.denitti@poliba.it}

\author[L.~Niebel]{Lukas Niebel}
\address[L.~Niebel]{Institut f\"ur Analysis und Numerik, Universit\"at M\"unster\\
  Orl\'eans-Ring 10, 48149 M\"unster, Germany.}
\email[]{lukas.niebel@uni-muenster.de}

\author[J.~Yang]{Jiaqi Yang}
\address[J.~Yang]{Northwestern Polytechnical University, School of Mathematics and Statistics, Youyi West Road 127, Beilin District
  Xi’an, 710072 Shaanxi, P.\,R.~China.}
\email[]{yjqmath@nwpu.edu.cn}

\begin{abstract}
We study the two-dimensional stationary incompressible Navier--Stokes equations on \(\R^2\) with fractional dissipation \((-\Delta)^s\). In the full range \(s \in (0,1)\), we prove that every smooth solution satisfying the natural energy condition \(u\in\Hdot^s(\R^2;\R^2)\) has \(u\equiv0\) and constant pressure. This is a fractional counterpart
of the planar finite-Dirichlet theorem of Gilbarg and Weinberger at \(s=1\).
The proof uses different arguments in three ranges. For \(0<s<\frac13\), we combine an \(\mathrm L^2\)-estimate derived from the equation with a stream-function truncation argument. For \(\frac13\leq s\leq\frac23\), we use a localized energy estimate whose boundary terms are supported on expanding annuli. For \(\frac23<s<1\), we establish regularity and decay via a Lorentz-space bootstrap and then apply the maximum principle to the vorticity. We also treat the stationary damped Euler system at \(s=0\) by combining the Bernoulli identity with a cut-off argument under an annular growth condition that includes \(u\in L^r(\R^2)\) for every \(1\le r\le2\).
\end{abstract}

\maketitle

\section{Introduction}
\label{sec:introduction}

The fractional, or hypodissipative, Navier--Stokes system replaces the classical
Laplacian by a fractional Laplace operator of order \(2s\in(0,2)\). It therefore
interpolates between the damped Euler system, where no viscous diffusion is present, and
the classical Navier--Stokes system, corresponding to \(s=1\). In this paper we
study the two-dimensional incompressible stationary hypodissipative Navier--Stokes
system on the whole plane:
\begin{equation}\label{eq:NS}
  \begin{cases}
    u\cdot\nabla u + (-\Delta)^s u + \nabla p = 0, & x\in\R^2, \\
    \operatorname{div} u =0,                       & x\in\R^2,
  \end{cases}
\end{equation}
where \(0<s<1\). Here, $u = u(x) \colon \R^2 \to \R^2$ is the velocity field and $p \colon \R^2 \to \R$ is the pressure. For sufficiently regular functions,
\[
  (-\Delta)^s u(x)
  = c_{2,s}\,\mathrm{p.v.}\!\int_{\R^2}
  \frac{u(x)-u(y)}{|x-y|^{2+2s}}\,\mathrm d y,
\]
where \(c_{2,s}\) is chosen so that the Fourier representation
\[
  \widehat{(-\Delta)^s u}
  = |\xi|^{2s}\,\widehat u
\]
holds in \(\R^2\); see \cite{BuVa2016,MR3967804}.

The natural homogeneous energy associated with \cref{eq:NS} is
\[
  [u]_{\Hdot^s(\R^2)}^2
  \coloneqq
  \|(-\Delta)^{s/2}u\|_{\mathrm L^2(\R^2)}^2
  =
  \frac{c_{2,s}}{2}
  \iint_{\R^2\times\R^2}
  \frac{|u(x)-u(y)|^2}{|x-y|^{2+2s}}\,\mathrm d x\,\mathrm d y .
\]
Throughout the paper, \(\Hdot^s(\R^2)\) denotes the standard homogeneous Sobolev
space realized inside \(\mathrm L^{q_s}(\R^2)\), where
\(
q_s \coloneqq \frac{2}{1-s}.
\)
In particular, non-zero constants do not belong
to \(\Hdot^s(\R^2)\). By the homogeneous Sobolev embedding in dimension two
(see \cite[Theorem~6.1.3]{MR2463316} for the strong
\(\mathrm L^{q_s}\)-embedding and \cite[Theorem~4.10 and eq.~(1.8)]{MR223874} for
the Lorentz refinement),
\begin{equation}\label{eq:SobolevEmbedding}
  \Hdot^s(\R^2)
  \hookrightarrow
  \mathrm L^{q_s,2}(\R^2)
  \hookrightarrow
  \mathrm L^{q_s}(\R^2),
  \qquad
  q_s = \frac{2}{1-s}.
\end{equation}

The main subject of this paper is the study of Liouville-type theorems for
\cref{eq:NS}. These are rigidity results asking whether a solution defined on the
whole space must be trivial under natural decay, integrability, or finite-energy
assumptions.

For the hypodissipative system \cref{eq:NS}, this question is particularly
delicate because the fractional diffusion is weaker. The main issue is to determine whether the natural energy
condition \(u\in \Hdot^s(\R^2)\), together with the stationary equation and the
incompressibility constraint, is strong enough to rule out nontrivial solutions.

The main result of this note is the following Liouville-type theorem that covers the full range $s \in (0,1)$.

\begin{theorem}\label{thm:main-bounded}
  Let $s \in (0,1)$ and let \((u,p)\in\C^\infty(\R^2;\R^2)\times\C^\infty(\R^2)\) be a smooth solution of \cref{eq:NS}. 
  If
  \[
    u\in \Hdot^s(\R^2;\R^2),
  \]
  then $u\equiv 0$ and \(p\) is constant.
\end{theorem}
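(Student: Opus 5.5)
The plan follows the three-range strategy indicated in the abstract. In every case the starting point is the Sobolev embedding \cref{eq:SobolevEmbedding}, which gives \(u\in \mathrm L^{q_s,2}\subset\mathrm L^{q_s}\) with \(q_s=2/(1-s)\). Taking the divergence of \cref{eq:NS}, the pressure is recovered (up to a constant) as \(p=R_iR_j(u_iu_j)\in\mathrm L^{q_s/2}\), since the Riesz transforms \(R_i\) are bounded on Lorentz spaces. Testing \cref{eq:NS} formally against \(u\) yields \([u]_{\Hdot^s}^2=0\), because the convective and pressure terms integrate to zero. Each regime consists of justifying this identity, or a substitute for it, under the available integrability.

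\textbf{Range \(\frac13\le s\le\frac23\).} Test the equation against \(u\varphi_R\), with \(\varphi_R=\varphi(\cdot/R)\) a standard cut-off. This gives
\[
\int \varphi_R\,u\cdot(-\Delta)^s u=\int \Bigl(\tfrac12|u|^2+p\Bigr)u\cdot\nabla\varphi_R .
\]
The left side tends to \([u]_{\Hdot^s}^2\), after writing it as the bilinear form plus a commutator term whose kernel contains \(\varphi_R(x)-\varphi_R(y)\); that term vanishes as \(R\to\infty\) by dominated convergence. The right side is supported on the annulus \(A_R=\{R<|x|<2R\}\). By H\"older it is bounded by
\[
R^{-1}\,\|u\|^3_{\mathrm L^{3}(A_R)}\lesssim R^{-1}R^{2(1-3/q_s)}\|u\|^3_{\mathrm L^{q_s}(A_R)}
\]
whenever \(q_s\ge3\), i.e.\ \(s\ge\frac13\). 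The exponent \(1-6/q_s=1-3(1-s)=3s-2\) is at most \(0\) exactly when \(s\le\frac23\), and \(\|u\|_{\mathrm L^{q_s}(A_R)}\to0\). Hence the boundary term vanishes, so \([u]_{\Hdot^s}=0\) and \(u\equiv0\).

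\textbf{Range \(0<s<\frac13\).} Here \(q_s<3\), so the cubic boundary term is not controlled by \(\mathrm L^{q_s}\) alone. I would first obtain \(u\in\mathrm L^2\) from the equation. The idea is to write \(u=(-\Delta)^{-s}\mathbb P\operatorname{div}(u\otimes u)\), where \(\mathbb P\) is the Leray projection. Since \(u\otimes u\in \mathrm L^{q_s/2}\), fractional integration of order \(2s-1\) (a singular integral composed with a Riesz potential of order \(2s<1\), leaving a negative-order derivative) places \(u\) in \(\mathrm L^r\) with \(1/r=(1-s)-(2s-1)/2=\frac12\). This gives precisely \(u\in\mathrm L^2\), at least in weak-\(\mathrm L^2\), with the Lorentz refinement upgrading it. Next I would use the stream function \(\psi\) with \(u=\nabla^\perp\psi\) and test the equation with a truncation \(\nabla^\perp(\psi_R\varphi_R)\), where \(\psi_R\) subtracts the annular mean of \(\psi\). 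Poincar\'e on annuli together with \(u\in\mathrm L^2\cap\mathrm L^{q_s}\) makes the boundary terms \(o(1)\). The pressure pairs with a divergence-free test field and drops out, up to commutator terms handled in the same way. I expect this range to be the main obstacle. The delicate point is controlling \(\int|u|^2 u\cdot\nabla\varphi_R\) using only \(\mathrm L^2\) and \(\mathrm L^{q_s}\) with \(q_s<3\), which is exactly why the stream-function antisymmetry is needed to rewrite the cubic term with one factor in \(\psi-\bar\psi\).

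\textbf{Range \(\frac23<s<1\).} The energy boundary term now grows like \(R^{3s-2}\), so I would switch to the vorticity \(\omega=\nabla^\perp\!\cdot u\), which solves
\[
u\cdot\nabla\omega+(-\Delta)^s\omega=0 .
\]
A bootstrap in Lorentz spaces, applied to \(u=(-\Delta)^{-s}\mathbb P\operatorname{div}(u\otimes u)\) and to its derivatives, improves integrability step by step; since \(s>\frac23\) each step gains. This should yield \(u\to0\) and \(\omega\in\mathrm L^\infty\) with \(\omega(x)\to0\) as \(|x|\to\infty\). Suppose \(\omega\) attains a positive maximum at \(x_0\). There \(\nabla\omega(x_0)=0\), while \((-\Delta)^s\omega(x_0)>0\) unless \(\omega\) is constant, contradicting the equation. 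Applying the same argument to the minimum gives \(\omega\equiv0\). Then \(u\) is harmonic and belongs to \(\mathrm L^{q_s}\), so \(u\equiv0\). Finally, in every range \(\nabla p=0\), so \(p\) is constant.
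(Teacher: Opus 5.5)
The ranges \(\frac13\le s\le\frac23\) (cut-off \(u\varphi_R\), flux of size \(R^{3s-2}\)) and \(\frac23<s<1\) (Lorentz bootstrap, decay of \(u\) and \(\nabla u\), nonlocal maximum principle for \(\omega\)) follow the paper's route. Three points there still need fixing, all routine:
\begin{itemize}
\item The identity \(p=\mathcal R_i\mathcal R_j(u_iu_j)+\mathrm{const}\) needs proof. A priori the difference is only harmonic, so an affine part \(a\cdot x\) must be ruled out.
\item The pressure part of the flux must be bounded by \(R^{3s-2}\|p\|_{\mathrm L^{q_s/2}(A_R)}\|u\|_{\mathrm L^{q_s}(A_R)}\), not by \(R^{-1}\|u\|_{\mathrm L^3(A_R)}^3\), because \(p\) is nonlocal.
\item The polynomial ambiguity created by inverting \((-\Delta)^s\) must be removed.
\end{itemize}

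The real gap is the range \(0<s<\frac13\). Your \(\mathrm L^2\) gain is derived incorrectly. For \(s<\frac12\) the operator \((-\Delta)^{-s}\PP\operatorname{div}\) has symbol homogeneous of degree \(1-2s>0\). It is a derivative of positive order, so there is no Hardy--Littlewood--Sobolev gain. The arithmetic is also off: \((1-s)-\frac{2s-1}{2}=\frac32-2s\), not \(\frac12\). The conclusion \(u\in\mathrm L^2\) is nevertheless true, but it comes from splitting frequencies. On \(B_1\) the multiplier \(|\xi|^{1-2s}\) is bounded, and Hausdorff--Young gives \(\widehat{u\otimes u}\in\mathrm L^{1/s}\subset\mathrm L^2(B_1)\). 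The region \(|\xi|\ge1\) is controlled by \([u]_{\Hdot^s}\).

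More decisively, the test field \(\nabla^\perp(\psi_R\varphi_R)=\varphi_R u+\psi_R\nabla^\perp\varphi_R\) does not remove the cubic flux. The pressure drops out, but using \(u\cdot\nabla\psi=0\) one finds
\[
  \int(u\cdot\nabla u)\cdot\nabla^\perp(\psi_R\varphi_R)\,\mathrm dx
  =-\frac12\int|u|^2\,u\cdot\nabla\varphi_R\,\mathrm dx
  -\int\psi_R\,u_iu_j\,\partial_j(\nabla^\perp\varphi_R)_i\,\mathrm dx .
\]
The second term is harmless. The first is exactly the flux you wanted to avoid, and \(R^{-1}\int_{A_R}|u|^3\) is not controlled by \(u\in\mathrm L^2\cap\mathrm L^{q_s}\) with \(q_s<3\). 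You could rewrite the convection term so that it carries a factor \(\psi_R\), for example as \(\int\omega\,\psi_R\,u\cdot\nabla\varphi_R\,\mathrm dx\). That costs a full derivative on \(u\), which is unavailable. So the scheme does not close.

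The missing idea is to cut along streamlines rather than in space. The paper tests with
\[
  z_\varepsilon=\nabla^\perp(\psi-\mathcal C_\varepsilon(\psi))=\mathds 1_{\{|\psi|>\varepsilon\}}u .
\]
This field is divergence-free and compactly supported, so
\[
  \int(u\cdot\nabla u)\cdot z_\varepsilon\,\mathrm dx=\int z_\varepsilon\cdot\nabla\tfrac{|u|^2}{2}\,\mathrm dx=0
\]
exactly. There is no flux because \(u\) is tangent to the level sets of \(\psi\). Two ingredients your plan lacks are needed:
\begin{itemize}
\item Decay \(\psi(x)\to0\) with \(\psi\in\mathrm H^{1+s}\), which is what makes \(z_\varepsilon\) compactly supported. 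It comes from a second use of the equation: \(|\widehat\psi(\xi)|\lesssim|\xi|^{-2s}\|u\|_{\mathrm L^2}^2\) on \(B_1\), which is square integrable if and only if \(s<\frac12\).
\item The uniform bound \(\|\mathcal C_\varepsilon(\psi)\|_{\mathrm H^{1+s}}\lesssim\|\psi\|_{\mathrm H^{1+s}}\), valid below \(\mathrm H^{3/2}\). It gives \(z_\varepsilon\rightharpoonup u\) in \(\mathrm H^s\), and hence \([u]_{\Hdot^s}^2=0\).
\end{itemize}
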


The corresponding Liouville theorem also holds at the zero-order endpoint $s=0$:
\[
  (-\Delta)^0=I,
  \qquad
  \Hdot^0(\R^2)=\mathrm L^2(\R^2),
\]
and \cref{eq:NS} becomes the stationary damped Euler system
\begin{equation}\label{eq:euler}
\begin{cases}
  u\cdot\nabla u+u+\nabla p=0, & x \in \mathbb R^2, 
  \\
  \operatorname{div} u=0, & x \in \mathbb R^2.
  \end{cases}
\end{equation}

\begin{theorem}\label{thm:damped-euler}
  Let \((u,p)\in\C^\infty(\R^2;\R^2)\times\C^\infty(\R^2)\) be a smooth solution of \cref{eq:euler}. Assume that
  \begin{equation}\label{eq:damped-annular-condition}
    \liminf_{R\to\infty}\frac1R
    \int_{B_{2R}\setminus B_R}|u(x)|\,\mathrm dx=0.
  \end{equation}
  Then \(u\equiv0\) and \(p\) is constant. In particular, the conclusion holds if
  \(u\in\mathrm L^r(\R^2;\R^2)\) for some \(1\le r\le2\).
\end{theorem}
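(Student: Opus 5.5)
The plan is to use the Bernoulli structure of the stationary Euler nonlinearity in two dimensions. Write \(\omega=\partial_1u_2-\partial_2u_1\), so that \(u\cdot\nabla u=\nabla\tfrac{|u|^2}{2}+\omega u^\perp\) with \(u^\perp=(-u_2,u_1)\). Then \cref{eq:euler} becomes
\[
  \nabla\Pi+\omega u^\perp+u=0,\qquad \Pi\coloneqq p+\tfrac12|u|^2 .
\]
Taking the scalar product with \(u\) gives \(u\cdot\nabla\Pi=-|u|^2\), because \(u^\perp\cdot u=0\). Thus the Bernoulli function \(\Pi\) decreases strictly along nonvanishing streamlines. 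Since \(\operatorname{div}u=0\), this reads \(\operatorname{div}(\Pi u)=-|u|^2\).

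\textbf{Cut-off argument.} Take a radial cut-off \(\varphi_R(x)=\varphi(x/R)\) with \(\varphi\equiv1\) on \(B_1\), \(\operatorname{supp}\varphi\subset B_2\), and \(|\nabla\varphi_R|\lesssim R^{-1}\) supported on the annulus \(A_R=B_{2R}\setminus B_R\). Integrating \(\operatorname{div}(\Pi u)=-|u|^2\) against \(\varphi_R\) gives
\[
  \int|u|^2\varphi_R=\int \Pi\,u\cdot\nabla\varphi_R .
\]
The right-hand side is bounded by \(R^{-1}\int_{A_R}|\Pi||u|\), and \(\Pi\) is not yet controlled. To fix this I would replace \(\Pi\) by a bounded function \(G(\Pi)\), with \(G\) nondecreasing and bounded, say \(G=\arctan\) or a truncation \(\max(\min(\Pi,M),-M)\). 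Then
\[
  \operatorname{div}(G(\Pi)u)=G'(\Pi)\,u\cdot\nabla\Pi=-G'(\Pi)|u|^2\le0,
\]
and therefore
\[
  \int G'(\Pi)|u|^2\varphi_R=\int G(\Pi)\,u\cdot\nabla\varphi_R\le \frac{C\|G\|_\infty}{R}\int_{A_R}|u| .
\]
Along a sequence \(R_k\to\infty\) realizing \cref{eq:damped-annular-condition}, the right-hand side tends to \(0\). Monotone convergence then gives \(\int_{\R^2}G'(\Pi)|u|^2=0\). With \(G=\arctan\) we have \(G'>0\) everywhere, so \(u\equiv0\). Plugging this back into the equation gives \(\nabla p=0\), so \(p\) is constant.

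\textbf{The \(\mathrm L^r\) case.} It remains to show that \(u\in\mathrm L^r\) with \(1\le r\le2\) implies \cref{eq:damped-annular-condition}. By Hölder,
\[
  \frac1R\int_{A_R}|u|\le \frac1R\,|A_R|^{1-1/r}\,\|u\|_{\mathrm L^r(A_R)}\lesssim R^{1-2/r}\,\|u\|_{\mathrm L^r(A_R)} .
\]
Here \(R^{1-2/r}\) stays bounded because \(r\le2\), and \(\|u\|_{\mathrm L^r(A_R)}\to0\) as the tail of a convergent integral. So the whole expression tends to \(0\).

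\textbf{Expected obstacle.} The main subtlety is avoiding any growth assumption on \(p\). The composition \(G(\Pi)\) with a bounded, strictly increasing \(G\) is what handles this. It works only because the identity \(u\cdot\nabla\Pi=-|u|^2\) holds pointwise for smooth solutions, so the chain rule applies without any integrability of \(\Pi\).
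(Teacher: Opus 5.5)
Your proposal is correct and follows essentially the same route as the paper. Both arguments use the Bernoulli identity \(u\cdot\nabla B=-|u|^2\) for \(B=p+\frac12|u|^2\), compose with \(\arctan\) to get \(\operatorname{div}(u\arctan B)=-|u|^2/(1+B^2)\) with no control on \(p\), bound the cut-off flux by \(CR^{-1}\int_{A_R}|u|\), and use H\"older's inequality for the \(\mathrm L^r\) case. The only cosmetic difference is that the paper passes to the limit along \(R_k\) with Fatou's lemma, so it does not need \(\varphi_R\) to be monotone in \(R\) as your monotone-convergence step implicitly does.
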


\subsection{Survey of the literature}
\label{ssec:lit}

Liouville-type theorems for stationary Navier--Stokes systems have their classical
starting point in the planar case \(s=1\). Gilbarg and Weinberger proved that a
steady two-dimensional solution with finite Dirichlet integral must have constant
velocity \cite{MR501907}. See also the later two-dimensional works of Kozono,
Terasawa, and Wakasugi, which further develop the vorticity and decay mechanisms
in related Liouville problems \cite{MR3797615,MR4448585}.

The three-dimensional case is more delicate; many classical Liouville criteria impose additional integrability, decay, or structural hypotheses; see \cite{MR3289443} and the references therein. It is remarked in \cite[Remark X.9.4]{MR2808162} and \cite{MR3538409}
that it remains an open problem to prove that a solution of the three-dimensional stationary Navier--Stokes equations satisfying
\[
  u \in \Hdot^{1}(\mathbb{R}^3;\R^3)
  \qquad\text{and}\qquad
  u(x) \to 0 \quad \text{as } |x| \to +\infty
\]
must be identically zero. Nevertheless, under additional assumptions, results of
the following form are known: if
\(
u \in \Hdot^{1}(\mathbb{R}^3) \cap \mathrm X(\mathbb{R}^3)
\),
then \(u\equiv 0\).

Several choices of the auxiliary space \(X\) have been considered. In \cite[Theorem~X.9.5]{MR2808162}, the case
\(X=\mathrm L^{9/2}(\mathbb{R}^3)\) was treated;
the result of \cite{MR3538409} assumes
\(u\in\mathrm L^6\cap\mathrm{BMO}^{-1}\), and hence applies when
\(u\in\Hdot^1\cap\mathrm{BMO}^{-1}\);
and in \cite{MR4227049},
\(X=\mathrm L^q(\mathbb{R}^3)\) for some
\(
3 \le q \le \frac{9}{2},
\)
without assuming \(u\in \Hdot^1(\mathbb{R}^3)\). By the classical Sobolev
embedding,
\(
\Hdot^{1}(\mathbb{R}^3) \hookrightarrow \mathrm L^{6}(\mathbb{R}^3),
\)
but this information alone appears to be insufficient to conclude that a solution
\(u\in \Hdot^{1}(\mathbb{R}^3)\) is trivial.

In the fractional case \(s\in(0,1)\), earlier results were concentrated mainly in three dimensions and often
imposed additional global Lebesgue assumptions. In \cite{MR3808580}, Wang and Xiao treated an \(n\)-dimensional stationary fractional compressible Navier--Stokes--Poisson system using the Caffarelli--Silvestre extension (see \cite{MR2354493}). In the
constant-density planar case, their theorem gives triviality under $ u\in\Hdot^s(\R^2)\cap\mathrm L^2(\R^2)$ for $\frac12\leq s<1$, 
and under $u\in\Hdot^s(\R^2)\cap\mathrm L^2(\R^2)\cap\mathrm L^6(\R^2)$ for $0<s<\frac12$
Their incompressible three-dimensional corollary
assumes
\(u\in\Hdot^s(\R^3)\cap\mathrm L^{9/2}(\R^3)\) for \(0<s<1\).  In \cite{MR4450182}, Yang proved that, for \(5/6\le s<1\), a smooth \(L^{9/2}(\R^3)\)-solution that tends uniformly to
zero is trivial; no \(\Hdot^s\)-assumption is required

In \cite{MR4839126}, Chamorro and Poggi prove \(u\equiv0\) under the assumption
\(u\in\Hdot^s(\R^3)\cap
\mathrm L^{(6-\varepsilon)/(3-2s)}(\R^3)\) for for 
\(\frac12\leq s<1\); or \(u\in\Hdot^s(\R^3)\cap
\mathrm L^{(6-\varepsilon)/(3-2s)}(\R^3)\cap
\mathrm L^{(6+\varepsilon)/(3-2s)}(\R^3)\) for
\(\frac3{10}<s<\frac12\). In \cite{MR4813662}, Jarr\'in and Vergara-Hermosilla
proved \(u\equiv0\) when
\(u\in\Hdot^s(\R^3)\cap\mathrm L^p(\R^3)\), with \(\frac56<s\leq1\) and
\(\max\{3/(2s-1),3/s\}<p\leq9/2\).  As a Navier--Stokes
specialization of his fractional MHD results, Zeng
\cite{Zeng2025} proved \(u\equiv0\) when
\(\frac12\leq s\leq\frac56\) and
\(u\in\Hdot^s(\R^3)\). Tan
\cite{tan2025newliouvilletypetheorems} obtained,
in particular, \(u\equiv0\) when 
\(u\in\Hdot^s(\R^3)\cap
\Bdot^{1-2s}_{\infty,\infty}(\R^3)\), with \(\frac12<s<1\); at \(s=\frac56\), the result requires only
\(u\in\Hdot^{5/6}(\R^3)\).

In \cite{wang2025}, Wang, Yang, and Yu
proved that, for
\(\frac12<s<1\), the assumptions
\((-\Delta)^{s/2}u\in\mathrm L^2(\R^3)\),
\(u\in\dot{\mathrm W}^{1+2s,\infty}(\R^3)\), and
\(u(x)\to u_\infty\neq0\) uniformly as \(|x|\to\infty\) imply
\(u\equiv u_\infty\). In \cite{LT26}, Liu and Tan
removed the higher-derivative assumption:
if \((-\Delta)^{s/2}u\in\mathrm L^2(\R^3)\) and
\(u(x)\to u_\infty\) uniformly, then \(u\equiv u_\infty\) for
\(\frac12\leq s<1\) when \(u_\infty\neq0\), while
\(u\equiv0\) for \(\frac12\leq s\leq\frac56\) when
\(u_\infty=0\); when $u_\infty \equiv 0$, the argument extends to
\(\R^n\), \(n\geq2\), in the range
\(\frac12\leq s\leq(n+2)/6\)

In \cite{LeeLee2026}, Lee and Lee
proved  $u \equiv 0$, for
\(3\leq n\leq6\), assuming  either 
\(u\in\Hdot^s(\R^n)\), \(u(x)\to0\) uniformly as
\(|x|\to\infty\), for
\(n/6\leq s<(n+2)/6\) or 
\(\limsup_{R\to\infty}R^{-\lambda}
(\int_{B_{2R}\setminus B_R}|(-\Delta)^{s/2}u|^2\,\mathrm d x)^{1/2}
<\infty\) for some \(0\leq\lambda<s\), together with suitable
integrability assumptions on \(u\) or \(\nabla u\). Although \(n=2\)
is not included, the formal planar specialization of their
finite-energy range is \(\frac13\leq s<\frac23\).

For the stationary Euler equation with linear damping, the literature is more limited. In \cite{MR3627131}, Chepyzhov, Ilyin, and Zelik studied the time-dependent damped--driven Euler system on \(\R^2\); in particular, their energy identity applied to a time-independent solution with zero forcing, gives \(u\equiv0\) under the stronger assumption
\(u\in\mathrm H^1(\R^2)\). In \cite{MR3162482}, Chae
 proved that
$u\in\mathrm L^q(\R^2)$ and $
  p\in\mathrm L^{q/2}(\R^2)$, with 
$  q>6$, 
imply \(u\equiv0\). 

\subsection{Difficulties and outline of the proof}

Formally, testing \cref{eq:NS} against \(u\) would give
\(\|(-\Delta)^{s/2}u\|_{\mathrm L^2}^2=0\). The difficulty is to
justify this calculation under the sole assumption
\(u\in\Hdot^s(\R^2)\). Although the Sobolev embedding gives
\(u\in\mathrm L^{q_s}(\R^2)\), where \(q_s=2/(1-s)\), the homogeneous
energy assumption by itself does not provide \(\mathrm L^2\)-control
or sufficient pointwise decay. Thus \(u\) is not initially available
as a global test function.

A direct localization of the energy identity under the sole Sobolev
information works only in an intermediate range.   Control of the cubic boundary terms requires
\(q_s\geq3\), equivalently \(s\geq\frac13\), while the resulting
annular estimate contains the factor \(R^{3s-2}\) and therefore closes
only for \(s\leq\frac23\). The pressure must also be identified with
its Riesz-transform representative, up to an additive constant, before
the localized pressure term can be estimated.

These restrictions lead to three complementary arguments; the endpoint
\(s=0\) is treated separately.

\begin{description}
  \item[The range \(0<s<\frac13\)]
  The projected equation first yields \(u\in L^2(\R^2)\), and a second use
  of that equation controls the associated stream function at zero frequency:
  \(u=\nabla^\perp\psi\) with
  \(\psi\in H^{1+s}(\R^2)\cap C_0(\R^2)\).
  Truncating the values of \(\psi\) produces compactly supported,
  divergence-free test fields for which both pressure and convection vanish.
  The truncations converge weakly to \(u\) in \(H^s\), recovering the full
  energy identity. This method works throughout \(0<s<\frac12\).

  \item[The range \(\frac13\le s\le\frac23\)]
  After normalizing
  \(p=\mathcal R_i\mathcal R_j(u_i u_j)\), we test against \(u\chi_R\).
  The dissipative term converges to the full fractional energy, while the
  Bernoulli boundary flux is bounded by
  \(R^{3s-2}\) times vanishing annular norms. More generally, the same
  argument works for every \(0<s<1\) if
  \(u\in L^r(\R^2)\) for some \(3\le r\le6\).

  \item[The range \(\frac23<s<1\)]
  A Lorentz-space iteration applied to the projected equation gives
  boundedness. H\"older potential estimates then give
  \(u\in C_b^{1,\gamma}\) and the decay of \(u\) and \(\nabla u\).
  The vorticity is continuous and vanishes at infinity, so the nonlocal
  maximum principle yields \(\omega=0\), from which we deduce \(u\equiv0\).

  \item[The endpoint \(s=0\)]
  For the Bernoulli function \(B=p+\frac12|u|^2\), the damped Euler equation gives
  $
    \operatorname{div}\bigl(u\arctan B\bigr)
    =-\frac{|u|^2}{1+B^2}.
  $
  Testing against expanding cutoffs and using a suitable annular growth condition makes the boundary flux vanish and
  forces \(u=0\).
\end{description}

Some preliminary results are collected in \cref{sec:prelim}. The
stream-function, annular cut-off, and vorticity arguments are presented
in \cref{sec:stream-range,sec:proof-sub,sec:proof-super}, respectively,
and combined in \cref{sec:proofs}. The damped Euler endpoint is treated separately in \cref{sec:euler}.

Throughout the paper, $C$ denotes a positive constant whose value may change from line to line. Moreover, the symbols $\lesssim$, $\gtrsim$, and $\approx$ denote comparisons up to positive constants depending only on the fixed parameters under consideration. 

\section{Preliminaries}
\label{sec:prelim}
Taking divergence in \cref{eq:NS}, using $\operatorname{div}u=0$, we obtain
\[
  -\Delta p=\partial_i\partial_j(u_i u_j).
\]
For the estimates below we use the normalized pressure associated with the
velocity field,
\begin{equation}\label{eq:pressure}
  p_0\coloneqq
  (-\Delta)^{-1}\partial_i\partial_j(u_i u_j)
  =\mathcal R_i\mathcal R_j(u_i u_j),
\end{equation}
where \(\mathcal R_i\) denotes the \(i\)-th Riesz transform; see \cite[Proposition~5.1.17, Example~5.1.18, and Corollary~5.2.8]{MR3243734}.
Let us explain why we may work with this normalized pressure.
\begin{lemma}\label{lem:pressure-normalization}
  Let \(0<s<1\), let \(u\in \Hdot^s(\R^2;\R^2)\) be divergence-free, and let
  \((u,p)\) be a smooth solution of \cref{eq:NS} in \(\R^2\). Set
  \[
    q_s=\frac{2}{1-s},
    \qquad
    r_s=\frac{q_s}{2}=\frac{1}{1-s},
  \]
  and consider the normalized pressure defined in \cref{eq:pressure}.
  Then \(p-p_0\) is constant. In particular,
  \begin{equation}\label{eq:normalized-equation}
    (-\Delta)^s u+u\cdot\nabla u+\nabla p_0=0
    \qquad\text{in }\mathcal S'(\R^2;\R^2).
  \end{equation}
\end{lemma}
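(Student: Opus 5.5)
The plan is to show that \(\nabla(p-p_0)\) is a tempered distribution that is both harmonic and decaying in an averaged sense, and then conclude by Liouville.

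First, by the Sobolev embedding \cref{eq:SobolevEmbedding} we have \(u\in\mathrm L^{q_s}(\R^2)\). Hence \(u_iu_j\in\mathrm L^{r_s}(\R^2)\) with \(r_s=1/(1-s)\in(1,\infty)\). The Riesz transforms are bounded on \(\mathrm L^{r_s}\), so \(p_0=\mathcal R_i\mathcal R_j(u_iu_j)\in\mathrm L^{r_s}(\R^2)\) is well defined, and \(-\Delta p_0=\partial_i\partial_j(u_iu_j)\) in \(\mathcal S'\).

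Next we show that every term of the equation for \(u\) is a tempered distribution. The term \(u\cdot\nabla u=\partial_j(u_ju)\) (using \(\operatorname{div}u=0\)) is a derivative of an \(\mathrm L^{r_s}\) function, hence tempered. Since \(u\in\Hdot^s\subset\mathrm L^{q_s}\), \((-\Delta)^s u\) is defined in \(\mathcal S'\) by duality. Writing it as \((-\Delta)^{s/2}\) applied to \((-\Delta)^{s/2}u\in\mathrm L^2\), it is a tempered distribution whose Fourier transform equals \(|\xi|^{2s}\hat u\) away from the origin. We also need this distributional fractional Laplacian to agree with the pointwise principal-value definition for the smooth \(u\). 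This holds because \(u\in\mathrm L^{q_s}\) gives the integrability \(\int|u(y)|(1+|y|)^{-2-2s}\,\mathrm dy<\infty\), for which the pointwise and distributional definitions coincide. Consequently \(\nabla p=-(-\Delta)^s u-u\cdot\nabla u\) belongs to \(\mathcal S'\).

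Now set \(h\coloneqq p-p_0\). Taking the divergence of the equation gives \(\Delta p=-\partial_i\partial_j(u_iu_j)\), and \(p_0\) satisfies the same identity, so \(\Delta h=0\). Moreover \(\nabla h=\nabla p-\nabla p_0\in\mathcal S'\). Each component of \(\nabla h\) is harmonic and tempered, so its Fourier transform is supported at \(\{0\}\), and therefore \(\nabla h\) is a polynomial vector field \(P\).

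The main obstacle is to show that \(P\) vanishes. The approach is to show that \(\nabla h\) tends to zero in a weak, averaged sense at infinity. Write
\[
\nabla h=-(-\Delta)^s u-\partial_j(u_ju)-\nabla p_0 .
\]
Each term on the right is of the form \(D^{\alpha}F\) or \((-\Delta)^{s/2}G\) with \(F\in\mathrm L^{r_s}\) or \(G\in\mathrm L^2\). Testing against rescaled bumps \(\varphi_R(x)=R^{-2}\varphi(x/R)\) with \(\varphi\in C_c^\infty\), each term gives a contribution of order \(R^{-1-2/r_s}\) or \(R^{-s-1}\), which tends to zero. On the other hand, \(\langle P,\varphi_R\rangle\) tends to zero as \(R\to\infty\) for every such \(\varphi\) only if \(P\equiv0\): if \(P\) has degree \(k\), choosing \(\varphi\) so that it does not annihilate the top-degree part makes \(\langle P,\varphi_R\rangle\) grow like \(R^k\), and for \(k=0\) it is a nonzero constant. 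For the \((-\Delta)^s u\) term we use \((-\Delta)^{s/2}\varphi_R\in\mathrm L^2\) with norm of order \(R^{-1-s}\), which is finite.

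Hence \(\nabla h=0\), so \(p-p_0\) is constant. Since the original equation holds pointwise and each term is tempered, substituting \(\nabla p=\nabla p_0\) gives \cref{eq:normalized-equation} in \(\mathcal S'(\R^2;\R^2)\).
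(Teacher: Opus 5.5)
Your proposal is correct and follows essentially the same route as the paper. You show that \(\nabla(p-p_0)\) is a tempered harmonic distribution, hence a polynomial, and you eliminate it by testing against rescaled bumps and comparing growth rates; your \(R^{-2}\)-normalization of \(\varphi_R\) is just the paper's comparison of \(R^{d+2}\) against \(R^{1-s}+R^{2s-1}\), divided by \(R^2\). Your remark that the pointwise principal-value \((-\Delta)^s u\) coincides with the \(\Hdot^s\)-duality one, via \(\int|u|(1+|y|)^{-2-2s}\,\mathrm dy<\infty\), makes explicit a point the paper uses only tacitly.
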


\begin{proof}
  By the homogeneous Sobolev embedding \cref{eq:SobolevEmbedding},
  \(u\in \mathrm L^{q_s}(\R^2)\). Hence
  \[
    u_i u_j\in \mathrm L^{r_s}(\R^2),
    \qquad
    r_s=\frac{1}{1-s}\in(1,\infty).
  \]
  Therefore Calder\'on--Zygmund theory gives
  \[
    p_0\in \mathrm L^{r_s}(\R^2),
    \qquad
    \|p_0\|_{\mathrm L^{r_s}}
    \lesssim
    \|u\otimes u\|_{\mathrm L^{r_s}}.
  \]

  Taking divergence in \cref{eq:NS} and using \(\operatorname{div}u=0\), we obtain
  \[
    -\Delta p=\partial_i\partial_j(u_i u_j).
  \]
  By the definition of \(p_0\), also
  \[
    -\Delta p_0=\partial_i\partial_j(u_i u_j).
  \]
  Thus \(h\coloneqq p-p_0\) is harmonic in the sense of distributions:
  \[
    \Delta h=0.
  \]

  We claim that \(\nabla h=0\). From \cref{eq:NS} and the identity
  \(u\cdot\nabla u=\operatorname{div}(u\otimes u)\), we have
  \[
    \nabla h
    =
    -(-\Delta)^s u
    -\operatorname{div}(u\otimes u)
    -\nabla p_0
    \qquad\text{in }\mathcal D'(\R^2;\R^2).
  \]
  Hence, for every \(\varphi\in C_c^\infty(\R^2;\R^2)\),
  \[
    \begin{aligned}
      |\langle \nabla h,\varphi\rangle|
       & \le
      [u]_{\Hdot^s}\,[\varphi]_{\Hdot^s}
      +
      \left(
      \|u\otimes u\|_{\mathrm L^{r_s}}
      +
      \|p_0\|_{\mathrm L^{r_s}}
      \right)
      \|\nabla\varphi\|_{\mathrm L^{r_s'}},
    \end{aligned}
  \]
  where \(r_s'=r_s/(r_s-1)=1/s\).

  The right-hand side shows in particular that \(\nabla h\) is a tempered
  distribution. Since \(\Delta h=0\), each component of \(\nabla h\) is a tempered
  harmonic distribution. Therefore \(\nabla h\) is a vector-valued harmonic
  polynomial.

  It remains to rule out non-zero polynomials. Suppose, for contradiction, that
  \(P\coloneqq\nabla h\not\equiv0\), and let \(d\ge0\) be its degree. Choose
  \(\varphi\in \C_c^\infty(\R^2;\R^2)\) such that, if \(P_d\) denotes the top
  homogeneous part of \(P\), then
  \[
    \int_{\R^2} P_d(y)\cdot \varphi(y)\,dy\ne0.
  \]
  For \(R>1\), set \(\varphi_R(x)=\varphi(x/R)\). Then
  \[
    |\langle P,\varphi_R\rangle|
    \ge cR^{d+2}
  \]
  for all sufficiently large \(R\). On the other hand, the previous estimate gives
  \[
    |\langle P,\varphi_R\rangle|
    \lesssim
    [\varphi_R]_{\Hdot^s}
    +
    \|\nabla\varphi_R\|_{\mathrm L^{r_s'}}
    \lesssim
    R^{1-s}+R^{2/r_s'-1}.
  \]
  Since \(r_s'=1/s\), this is
  \[
    |\langle P,\varphi_R\rangle|
    \lesssim
    R^{1-s}+R^{2s-1}.
  \]
  Both exponents \(1-s\) and \(2s-1\) are strictly smaller than \(1\), while
  \(d+2\ge2\). This is impossible as \(R\to\infty\). Hence \(P=0\), so
  \(\nabla h=0\), and therefore \(h\) is constant.

  Thus \(p-p_0\) is constant, and replacing \(p\) by \(p_0\) gives
  \cref{eq:normalized-equation}.
\end{proof}

From now on, whenever the pressure is estimated, we use this normalized pressure
and write $p$ instead of $p_0$.
In particular, whenever $u\otimes u\in \mathrm L^r(\R^2)$ with $1<r<\infty$,
\begin{equation}\label{eq:CZ-pressure}
  \|p\|_{\mathrm{L}^r(\R^2)} \lesssim \|u\otimes u\|_{\mathrm{L}^r(\R^2)}.
\end{equation}

We also use a standard truncation fact for homogeneous Sobolev spaces; see \cite[Lemma~B.1]{MR4225499}.

\begin{lemma}\label{lem:trunc}
  Let $0<s<1$, let \(m\in\mathbb N\), and
  {\(u\in\Hdot^s(\R^2;\R^m)\)}.  Let $\eta_R\in \C^\infty(\R^2)$ satisfy
  \[
    0\le \eta_R\le1,
    \qquad \eta_R\equiv0\text{ on }B_R,
    \qquad \eta_R\equiv1\text{ on }\R^2\setminus B_{2R},
    \qquad |\nabla\eta_R|\le C R^{-1},
  \]
  where \(C\) is independent of \(R\). Then
  \[
    [\eta_R u]_{\Hdot^s(\R^2)}\to0
    \qquad \text{as }R\to\infty.
  \]
\end{lemma}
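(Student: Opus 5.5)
Write \(\eta_R u = u - (1-\eta_R)u\) and set \(\chi_R\coloneqq 1-\eta_R\), which is supported in \(B_{2R}\), equals \(1\) on \(B_R\), and satisfies \(|\nabla\chi_R|\lesssim R^{-1}\). It suffices to show \(\chi_R u\to u\) in \(\Hdot^s\). The natural tool is the Gagliardo double integral:
\[
  (\eta_R u)(x)-(\eta_R u)(y)=\eta_R(x)\bigl(u(x)-u(y)\bigr)+u(y)\bigl(\eta_R(x)-\eta_R(y)\bigr).
\]
This splits \([\eta_R u]_{\Hdot^s}^2\) into two pieces:
\[
  I_R=\iint \eta_R(x)^2\frac{|u(x)-u(y)|^2}{|x-y|^{2+2s}}\,dx\,dy,
  \qquad
  II_R=\iint |u(y)|^2\frac{|\eta_R(x)-\eta_R(y)|^2}{|x-y|^{2+2s}}\,dx\,dy .
\]

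\emph{The term \(I_R\).} The integrand is dominated by the integrable Gagliardo kernel of \(u\), and \(\eta_R(x)\to0\) pointwise since \(\eta_R=0\) on \(B_R\). Hence \(I_R\to0\) by dominated convergence.

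\emph{The term \(II_R\).} This is the main step. Set
\[
  K_R(y)\coloneqq\int\frac{|\eta_R(x)-\eta_R(y)|^2}{|x-y|^{2+2s}}\,dx .
\]
Using \(|\eta_R(x)-\eta_R(y)|\le\min\bigl(1,CR^{-1}|x-y|\bigr)\) gives the uniform bound \(K_R(y)\lesssim R^{-2s}\). A finer bound holds when \(y\) is far from the annulus: if \(|y|\ge4R\) or \(|y|\le R/2\), then \(\eta_R(x)-\eta_R(y)\) vanishes unless \(|x-y|\gtrsim|y|\) (respectively \(\gtrsim R\)) and \(x\in B_{2R}\). For \(|y|\ge4R\) this gives
\[
  K_R(y)\lesssim R^2|y|^{-2-2s}.
\]

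Now apply Hölder with the Sobolev embedding \cref{eq:SobolevEmbedding}, \(u\in\mathrm L^{q_s}\) with \(q_s=2/(1-s)\). The conjugate exponent to \(q_s/2=1/(1-s)\) is \(1/s\). Splitting the \(y\)-integral into \(\{|y|\ge 4R\}\) and \(\{|y|<4R\}\):
\[
  \int_{|y|\ge4R}|u|^2K_R
  \lesssim \|u\|_{\mathrm L^{q_s}(|y|\ge4R)}^2 \Bigl(\int_{|y|\ge4R} R^{2/s}|y|^{-(2+2s)/s}\,dy\Bigr)^{s},
\]
and
\[
  \int_{|y|<4R}|u|^2K_R
  \lesssim \|u\|_{\mathrm L^{q_s}(B_{4R})}^2\,R^{-2s}\,(R^2)^{s}.
\]
In the first line the bracket is \(\approx R^{2/s}R^{2-(2+2s)/s}=R^{-2}\), so its \(s\)-th power is \(R^{-2s}\). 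The exponent \(-(2+2s)/s<-2\), so that integral converges. Both bounds are therefore \(O(1)\) times an \(\mathrm L^{q_s}\)-norm of \(u\): \(II_R\) is merely bounded, by scale invariance, not small.

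\emph{Obtaining smallness.} To make \(II_R\) vanish, I would exploit the region near \(B_R\). For \(|y|\le R/2\), the same reasoning gives \(K_R(y)\lesssim R^{2}R^{-2-2s}=R^{-2s}\), but paired with \(\|u\|_{\mathrm L^{q_s}(B_{R/2})}^2\), which does not decay. So a direct single-scale cutoff does not give smallness. The standard fix, which I expect the cited lemma uses, is to replace \(\eta_R\) by a logarithmic-type cutoff spread over \([R,R^{N}]\) or an average of dyadic cutoffs. Alternatively, and more simply, I would use density: approximate \(u\) in \(\Hdot^s\) by \(v\in C_c^\infty\) with \([u-v]_{\Hdot^s}<\varepsilon\). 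Then \(\eta_R v=0\) for large \(R\), and
\[
  [\eta_R(u-v)]_{\Hdot^s}\lesssim \bigl(I_R(u-v)\bigr)^{1/2}+\bigl(II_R(u-v)\bigr)^{1/2}\lesssim [u-v]_{\Hdot^s}+\|u-v\|_{\mathrm L^{q_s}}\lesssim\varepsilon,
\]
using the uniform \(O(1)\) bound above together with \cref{eq:SobolevEmbedding}. This yields \(\limsup_R[\eta_R u]_{\Hdot^s}\lesssim\varepsilon\), hence the claim.

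The main obstacle is the uniform bound on \(II_R\). It must be checked that the Hardy-type estimate \(\int|u|^2K_R\lesssim\|u\|_{\mathrm L^{q_s}}^2\) holds with constants independent of \(R\); the computations above do this by scaling. A second point is the density of \(C_c^\infty\) in \(\Hdot^s\) realized in \(\mathrm L^{q_s}\), which is standard for \(s<1=n/2\).
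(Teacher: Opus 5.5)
Your argument is correct, but it does not follow the paper, because the paper gives no proof of its own and simply invokes \cite[Lemma~B.1]{MR4225499}. Your route has two stages.
\begin{itemize}
\item You prove a uniform multiplier bound \([\eta_R w]_{\Hdot^s}\lesssim[w]_{\Hdot^s}\) for all \(w\in\Hdot^s\), with a constant independent of \(R\). This uses the Leibniz splitting of the Gagliardo difference, \(K_R\lesssim R^{-2s}\), the far-field bound \(K_R(y)\lesssim R^2|y|^{-2-2s}\), H\"older with exponents \(1/(1-s)\) and \(1/s\), and \(\|w\|_{\mathrm L^{q_s}}\lesssim[w]_{\Hdot^s}\).
\item You then conclude by density of \(\C_c^\infty\).
\end{itemize}
This is clean and modular. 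However, it relies on density of \(\C_c^\infty\) in \(\Hdot^s\) realized in \(\mathrm L^{q_s}\), and that density must come from an independent source. One option is Fourier-side density of Schwartz functions whose spectrum avoids the origin, followed by cutting off Schwartz functions in the inhomogeneous \(\mathrm H^s\)-norm. In real-variable treatments density is typically deduced from exactly this kind of cut-off lemma, so make sure your appeal to density is not circular.

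Two statements in your write-up should be corrected, although neither affects the final argument.

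First, in the far-field H\"older step the bracket is \(\approx R^{2/s}\cdot R^{2-(2+2s)/s}=R^{2/s}R^{-2/s}=1\), not \(R^{-2}\). This is what the scale invariance you invoke requires, and your \(O(1)\) conclusion stands.

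Second, your claim that a single-scale cut-off cannot give smallness directly is false.
\begin{itemize}
\item For \(|y|\ge R/2\), your own bounds give \(\int_{|y|\ge R/2}|u|^2K_R\lesssim\|u\|^2_{\mathrm L^{q_s}(\R^2\setminus B_{R/2})}\to0\).
\item For \(|y|<R/2\) one has \(K_R\lesssim R^{-2s}\). Splitting at a fixed radius \(R_1\) and using H\"older gives
\[
R^{-2s}\int_{B_{R/2}}|u|^2\,\mathrm dy\lesssim R^{-2s}|B_{R_1}|^{s}\|u\|^2_{\mathrm L^{q_s}}+\|u\|^2_{\mathrm L^{q_s}(\R^2\setminus B_{R_1})}.
\]
Hence the \(\limsup\) as \(R\to\infty\) is bounded by the \(\mathrm L^{q_s}\)-tail of \(u\) beyond \(R_1\), which is arbitrarily small.
\end{itemize}
Together with your observation that \(I_R\to0\) by dominated convergence, this proves the lemma directly. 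It needs no logarithmic or multi-scale cut-offs and no density, and it carries over verbatim to Gagliardo seminorms of \(\dot W^{s,p}(\R^n)\) with \(sp<n\). It is the more self-contained of the two routes, while yours buys a reusable uniform operator bound at the cost of importing density.
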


The following elementary facts will be used in the high-\(s\) range, above
the threshold at which the annular cutoff estimate closes.

We write \(\mathrm{UC}(\R^2;\R^m)\) for the space of globally uniformly
continuous functions from \(\R^2\) to \(\R^m\).
\begin{lemma}\label{lem:decay-uc}
 Let \(f:\R^2\to\R^m\) for some \(m\in\mathbb N\).
  \begin{enumerate}
    \item[(a)] If \(1\le q<\infty\) and $f\in \mathrm{L}^q(\R^2;\R^m)\cap \mathrm{UC}(\R^2;\R^m)$, then $f(x)\to0$ as $|x|\to\infty$.
    \item[(b)] If $f\in \C^1(\R^2;\R^m)$, $f(x)\to0$ as $|x|\to\infty$, and $\nabla f$ is uniformly continuous on $\R^2$, then $\nabla f(x)\to0$ as $|x|\to\infty$.
  \end{enumerate}
\end{lemma}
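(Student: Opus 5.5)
For (a) I will argue by contradiction. Suppose \(f(x)\not\to0\) as \(|x|\to\infty\). Then there are \(\varepsilon>0\) and a sequence \(x_k\) with \(|x_k|\to\infty\) and \(|f(x_k)|\ge\varepsilon\). Passing to a subsequence, I may assume \(|x_{k+1}|\ge|x_k|+2\), so the unit balls \(B_1(x_k)\) are pairwise disjoint. Uniform continuity gives \(\delta\in(0,1)\) such that \(|x-y|<\delta\) implies \(|f(x)-f(y)|<\varepsilon/2\). Hence \(|f|\ge\varepsilon/2\) on each \(B_\delta(x_k)\), and these balls are also disjoint. Summing over \(k\),
\[
  \int_{\R^2}|f|^q\,\mathrm dx\ \ge\ \sum_{k}\int_{B_\delta(x_k)}|f|^q\,\mathrm dx\ \ge\ \sum_k \pi\delta^2(\varepsilon/2)^q=\infty ,
\]
which contradicts \(f\in\mathrm L^q\) with \(q<\infty\). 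This step is routine.

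For (b) I will use a first-order Taylor (mean value) argument. Fix \(\varepsilon>0\) and let \(\delta>0\) be a modulus for \(\nabla f\), so that \(|\nabla f(x)-\nabla f(y)|<\varepsilon\) whenever \(|x-y|\le\delta\). For any \(x\) and any unit vector \(e\in\R^2\),
\[
  f(x+\delta e)-f(x)=\int_0^1 \nabla f(x+t\delta e)\,\delta e\,\mathrm dt .
\]
Therefore
\[
  \bigl|\delta\,\nabla f(x)e\bigr|\ \le\ |f(x+\delta e)|+|f(x)|+\delta\varepsilon .
\]
Because \(f\to0\) at infinity, there is \(R\) such that \(|f|\le\varepsilon\delta\) outside \(B_R\). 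For \(|x|\ge R+\delta\), both points \(x\) and \(x+\delta e\) lie outside \(B_R\), so \(|\nabla f(x)e|\le 3\varepsilon\) for every unit \(e\). Taking \(e\) along the coordinate directions (or taking the supremum over \(e\)) gives \(|\nabla f(x)|\lesssim\varepsilon\) for \(|x|\) large. Since \(\varepsilon\) was arbitrary, \(\nabla f(x)\to0\).

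The only point needing care is in (b): the threshold \(R\) has to be chosen after \(\delta\), so that the ratio \((|f(x+\delta e)|+|f(x)|)/\delta\) is small. Neither part presents a real obstacle.
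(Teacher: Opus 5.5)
Your proof is correct and follows the paper's approach. Part (a) is the same disjoint-balls contradiction argument, and part (b) uses the same mechanism of integrating \(\nabla f\) along a segment whose length is set by the modulus of continuity of \(\nabla f\), with decay of \(f\) at both endpoints; the only difference is that you run (b) directly with the explicit bound \(|\nabla f(x)e|\le 3\varepsilon\) for \(|x|\ge R+\delta\), whereas the paper argues by contradiction after extracting a subsequence along which a fixed component \(\sigma\,\partial_j f_i\) stays bounded below, a selection your version does not need.
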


\begin{proof}
  For part (a), suppose, for contradiction, that there exist $\varepsilon>0$ and $x_k\to\infty$ such that $|f(x_k)|\ge\varepsilon$. By uniform continuity, there exists $r>0$ such that $|f(x)-f(x_k)|\le\varepsilon/2$ whenever $x\in B_r(x_k)$; hence $|f(x)|\ge\varepsilon/2$ on every such ball. After passing to a subsequence, we may assume that these balls are pairwise disjoint. Then
  \[
    \int_{\R^2}|f|^q\, \mathrm{d} x \ge \sum_k\int_{B_r(x_k)}|f|^q\, \mathrm{d} x
    \ge \sum_k |B_r|\,(\varepsilon/2)^q = \infty,
  \]
  a contradiction.

  For part (b), we argue by contradiction again. Then there exist $\varepsilon>0$ and $x_k\to\infty$ such that $|\nabla f(x_k)|\ge\varepsilon$. Passing to a subsequence, we may assume that, for some fixed indices $i\in\{1,\dots,m\}$ and $j\in\{1,2\}$ and some fixed sign $\sigma\in\{-1,1\}$,
  \[
    \sigma\,\partial_j f_i(x_k)\ge \frac{\varepsilon}{\sqrt{2m}}
    \qquad \text{for all }k.
  \]
  By uniform continuity of $\nabla f$, there exists $r>0$ such that
  \[
    \sigma\,\partial_j f_i(x)\ge \frac{\varepsilon}{2\sqrt{2m}}
    \qquad \text{for every }x\in B_r(x_k),
  \]
  for all $k$. Hence, for each $k$,
  \[
    \sigma\bigl(f_i(x_k+\tfrac r2e_j)-f_i(x_k)\bigr)
    = \int_0^{r/2} \sigma\,\partial_j f_i(x_k+te_j)\, \mathrm{d} t
    \ge \frac{\varepsilon r}{4\sqrt{2m}}.
  \]
  Since both $x_k$ and $x_k+\frac r2e_j$ tend to infinity, this contradicts
  the assumed decay of $f$.
\end{proof}

We shall also use Lorentz spaces $\mathrm{L}^{p,r}(\R^2)$. The only product estimate needed below is the Lorentz-space H\"older inequality \cite[Theorem~4.5]{MR223874}:
\begin{equation}\label{eq:lorentz-holder}
  \|fg\|_{\mathrm{L}^{p/2,1}(\R^2)}
  \lesssim \|f\|_{\mathrm{L}^{p,2}(\R^2)}\,\|g\|_{\mathrm{L}^{p,2}(\R^2)},
  \qquad 2<p<\infty.
\end{equation}

For a matrix field \(F=(F_{\ell j})_{\ell,j=1}^2\), we use
\((\operatorname{div} F)_\ell=\partial_jF_{\ell j}\), and we denote by
\(\PP\) the \emph{Leray projector}.  If
\(F\in\mathrm L^r(\R^2;\R^{2\times2})\), with \(1<r<\infty\), we define
\[
  (\PP\operatorname{div} F)_i
  \coloneqq
  \partial_jF_{ij}+\partial_i\mathcal R_\ell\mathcal R_jF_{\ell j}.
\]
Equivalently, away from \(\xi=0\),
\[
  \widehat{(\PP\operatorname{div} F)}_i(\xi)
  =
  i\left(\delta_{i\ell}-\frac{\xi_i\xi_\ell}{|\xi|^2}\right)
  \xi_j\widehat F_{\ell j}(\xi).
\]
For every \(0<s<1\), the normalized equation and \cref{eq:pressure} give
\begin{equation} \label{eq:projected-equation}
  (-\Delta)^s u+\PP\operatorname{div}(u\otimes u)=0
  \qquad\text{in }\mathcal S'(\R^2;\R^2).
\end{equation}

In the range \(s>1/2\), set \(\alpha=2s-1\). The operator
\begin{equation}\label{eq:Tdef}
  T=(-\Delta)^{-s}\PP\operatorname{div}
\end{equation}
has Fourier multiplier
\[
  M_{i\ell j}(\xi)
  =
  i|\xi|^{-2s}
  \left(\delta_{i\ell}-\frac{\xi_i\xi_\ell}{|\xi|^2}\right)\xi_j,
  \qquad \xi\ne0.
\]
Thus \(M\) is smooth away from the origin and homogeneous of degree
\(-\alpha\). We regard \(T\) as an operator with values in
\(\mathcal S'(\R^2;\R^2)/\mathcal P\), where \(\mathcal P\) denotes the space of
vector-valued polynomials.

Consequently,
\begin{equation}\label{eq:u-equals-T}
  u=-T(u\otimes u)
  \qquad\text{in }\mathcal S'(\R^2;\R^2)/\mathcal P .
\end{equation}
Indeed, by the definition of \(T\), the Fourier transform of
\(u+T(u\otimes u)\) vanishes on \(\R^2\setminus\{0\}\). Hence it is supported at
\(\{0\}\), and the structure theorem for distributions with point support implies
that \(u+T(u\otimes u)\) is a vector-valued polynomial.

For \(0<\theta<1\), we write
\[
  [f]_{\Cdot^{0,\theta}}
  \coloneqq
  \sup_{x\ne y}\frac{|f(x)-f(y)|}{|x-y|^\theta},
  \qquad
  \mathrm C_b^{0,\theta}
  \coloneqq
  \mathrm L^\infty\cap\Cdot^{0,\theta}.
\]
We equip this space with the norm
\[
  \|f\|_{\mathrm C_b^{0,\theta}}
  \coloneqq
  \|f\|_{\mathrm L^\infty}+[f]_{\Cdot^{0,\theta}}.
\]
For \(0<\gamma<1\), we also set
\[
  \mathrm C_b^{1,\gamma}
  \coloneqq
  \left\{
  f\in \mathrm L^\infty:
  \nabla f\in \mathrm L^\infty,
  \ [\nabla f]_{\Cdot^{0,\gamma}}<\infty
  \right\}.
\]
Its norm is
\[
  \|f\|_{\mathrm C_b^{1,\gamma}}
  \coloneqq
  \|f\|_{\mathrm L^\infty}
  +\|\nabla f\|_{\mathrm L^\infty}
  +[\nabla f]_{\Cdot^{0,\gamma}}.
\]
The identity \cref{eq:projected-equation} is exact in \(\mathcal S'\). The
polynomial ambiguity appears only after applying the homogeneous inverse
\((-\Delta)^{-s}\). Every subsequent use of \cref{eq:u-equals-T} is therefore
understood as follows: choose a representative \(G\) of \(T(u\otimes u)\), then
\(u+G\) is a polynomial, and this polynomial is removed by
\cref{lem:polynomial-remainder}.

\begin{lemma}\label{lem:polynomial-remainder}
  Let $N\in\mathbb N$, $1\le p<\infty$, $0<\theta<1$, and suppose
  \[
    v+G=P
    \qquad\text{in }\mathcal S'(\R^2;\R^N),
  \]
  where $v\in \mathrm L^p(\R^2;\R^N)$, $P$ is an $\R^N$-valued polynomial, and
  $G\in \Cdot^{0,\theta}(\R^2;\R^N)$ modulo constants. Then $P$ is
  constant. Consequently $v\in\Cdot^{0,\theta}(\R^2;\R^N)$.

  The conclusion that $P$ is constant also holds if
  $G\in \mathrm L^\infty(\R^2;\R^N)$ instead of
  $G\in\Cdot^{0,\theta}(\R^2;\R^N)$. If instead
  $G\in\mathrm L^r(\R^2;\R^N)$ for some $1\le r<\infty$, then $P=0$.
\end{lemma}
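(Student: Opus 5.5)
The plan is to compare growth rates, testing the identity $P=v+G$ on large scales. The point is that a non-constant polynomial grows at least linearly at infinity, while $v$ is small in an averaged sense and $G$ grows sublinearly (or is bounded, or integrable).

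\emph{Step 1 (H\"older case).} Fix a representative of $G$, so that $G(x)=G(0)+O(|x|^\theta)$. I would use averages over translated balls. Let $d$ be the degree of $P$ and suppose $d\ge1$. Pick a unit vector $e$ along which the top homogeneous part $P_d$ does not vanish identically. Then $|P(x)|\gtrsim |x|^d$ on a cone $\Gamma$ around some direction, for $|x|$ large. For $x\in\Gamma$ with $|x|=R$, average the identity over the unit ball $B_1(x)$:
\[
\fint_{B_1(x)}P=\fint_{B_1(x)}v+\fint_{B_1(x)}G .
\]
The $v$-term is bounded by $\|v\|_{\mathrm L^p(B_1(x))}|B_1|^{-1/p}\le C\|v\|_{\mathrm L^p}$. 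The $G$-term is $O(R^\theta)+|G(0)|$. The $P$-term is $\gtrsim R^d$, using $\fint_{B_1(x)}P=P(x)+O(R^{d-1})$ in $\Gamma$. Since $\theta<1\le d$, letting $R\to\infty$ gives a contradiction. Hence $P$ is constant.

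Then $v=P-G$ almost everywhere, so $v$ has a representative which is in $\Cdot^{0,\theta}$, with $[v]_{\Cdot^{0,\theta}}=[G]_{\Cdot^{0,\theta}}$. The identity in $\mathcal S'$ is an identity of locally integrable functions, so equality holds a.e.

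\emph{Step 2 (bounded case).} The same averaging argument applies with the $G$-average bounded by $\|G\|_{\mathrm L^\infty}$, which is again incompatible with growth $\gtrsim R^d$ for $d\ge1$.

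\emph{Step 3 (integrable case).} Step 2's argument works equally well when $G\in \mathrm L^r$, since ball averages are bounded by $C\|G\|_{\mathrm L^r}$. So $P\equiv c$ is constant. To show $c=0$, note that $c=v+G\in \mathrm L^p+\mathrm L^r$. Averages over $B_1(x)$ of functions in $\mathrm L^p$ and in $\mathrm L^r$ (with $p,r<\infty$) tend to $0$ as $|x|\to\infty$, because $\|f\|_{\mathrm L^q(B_1(x))}\to0$ by absolute continuity of the integral (dominated convergence). The average of $c$ is $c$, so $c=0$.

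\emph{Main obstacle.} The only delicate point is the lower bound on the averaged polynomial. I would handle it by choosing $x=Re$ with $P_d(e)\neq0$. Then
\[
\fint_{B_1(Re)}P=R^dP_d(e)+O(R^{d-1}),
\]
since every other term in the Taylor expansion has lower degree in $R$. This avoids any need for a cone argument.
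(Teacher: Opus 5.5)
Your proof is correct. It is the same kind of growth comparison as the paper's, but the mechanism differs.

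The paper argues pointwise. It takes an open cone $\Gamma$ on which $|P(x)|\ge c|x|^d$ for large $|x|$. In the H\"older and bounded cases this gives $|v|\ge c'|x|^d$ on a set of infinite measure, contradicting $v\in\mathrm L^p$. In the $\mathrm L^r$ case neither term is controlled pointwise. So the paper splits each sector $\Gamma\cap\{R<|x|<2R\}$ into the part where $|v|\ge\frac c2R^d$ and its complement. This gives $\int|v|^p+\int|G|^r\gtrsim R^{2+d\min\{p,r\}}$ over the sector, which contradicts the vanishing of the tails for every $d\ge0$ at once.

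You instead average over unit balls $B_1(Re)$ along a single ray with $P_d(e)\neq0$. H\"older's inequality turns each hypothesis on $v$ and $G$ into a uniform bound on the ball averages, and these averages tend to zero when the exponent is finite. Meanwhile the averaged polynomial equals $R^dP_d(e)+O(R^{d-1})$.

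What your route buys:
\begin{itemize}
\item It treats all three cases with one estimate.
\item It needs only one direction rather than a cone. The paper asserts the cone without proof, though it follows easily from continuity of $P_d$ on the unit circle.
\item It replaces the level-set splitting by two stages: first $P$ is constant, then the constant vanishes.
\item If you average against a translated smooth bump instead of the normalized indicator of $B_1$, you can use the $\mathcal S'$ identity directly, without first passing to a.e.\ equality.
\end{itemize}
The paper's pointwise version is a little shorter in the H\"older and bounded cases.
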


\begin{proof}
  We prove the H\"older case first. Choose a representative of $G$ with $G(0)=0$.
  Then
  \[
    |G(x)|\le [G]_{\Cdot^{0,\theta}}|x|^\theta.
  \]
  If $P$ had degree $d\ge1$, then one of its components would have degree $d$.
  Hence, on some open cone $\Gamma\subset \R^2$ and for all sufficiently large
  $|x|$, one would have
  \[
    |P(x)|\ge c|x|^d.
  \]
  Since $d\ge1>\theta$, it would follow on $\Gamma$ and for all sufficiently
  large $|x|$ that
  \[
    |v(x)|=|P(x)-G(x)|\ge c'|x|^d,
  \]
  contradicting $v\in \mathrm L^p(\R^2;\R^N)$. Hence $P$ has degree zero.

  If $G\in\mathrm L^\infty$, the same cone argument excludes every non-constant
  polynomial, since the bounded term $G$ cannot cancel the growth of $P$ at
  infinity.

  Finally, suppose $G\in\mathrm L^r(\R^2;\R^N)$ with $1\le r<\infty$.
  We show that no non-zero polynomial $P$ is possible. Since
  $v+G=P$ in distributions and all three terms are locally integrable, the
  identity holds a.e.~after choosing representatives.

  Assume, for contradiction, that $P\not\equiv0$. Let $d\ge0$ be the degree of
  $P$. Then there exist an open cone $\Gamma\subset\R^2$, constants $c>0$ and
  $R_0>0$, such that
  \[
    |P(x)|\ge c|x|^d
    \qquad\text{for all }x\in\Gamma,\ |x|\ge R_0.
  \]
  For $R\ge R_0$, set
  \[
    A_R\coloneqq \Gamma\cap\{R<|x|<2R\}.
  \]
  Then $|A_R|\ge c_\Gamma R^2$ for some $c_\Gamma>0$, and on $A_R$ we have
  \[
    |v(x)|+|G(x)|\ge |P(x)|\ge cR^d
    \qquad\text{for a.e.~}x\in A_R.
  \]
  Decompose
  \[
    A_R^v\coloneqq
    \left\{x\in A_R:\ |v(x)|\ge \frac{c}{2}R^d\right\},
    \qquad
    A_R^G\coloneqq A_R\setminus A_R^v .
  \]
  On $A_R^G$ we have $|G(x)|\ge \frac{c}{2}R^d$. Hence
  \begin{align*}
    \int_{A_R}|v|^p\,\, \mathrm{d} x+\int_{A_R}|G|^r\,\, \mathrm{d} x
     & \ge
    \left(\frac c2\right)^p R^{dp}|A_R^v|
    +
    \left(\frac c2\right)^r R^{dr}|A_R^G| \\
     & \ge
    C R^{d\min\{p,r\}} |A_R|              \\
     & \ge
    C R^{2+d\min\{p,r\}} .
  \end{align*}
  This is impossible because each integral on the left is bounded by the
  corresponding tail integral over $\{|x|>R\}$, which tends to zero as
  $R\to\infty$. Therefore $P\equiv0$.
\end{proof}

The operator $T$ has order $-(2s-1)$.
We set
\[
  \alpha\coloneqq 2s-1\in(0,1).
\]
We shall use the following mapping properties.

\begin{lemma}\label{lem:mapping-T}
  Let $F:\R^2\to\R^{2\times2}$.
  \begin{enumerate}[label=(\textup{\alph*})]
    \item If $F\in\mathrm L^{m,r}(\R^2;\R^{2\times2})$,
          $1<m<2/\alpha$, and $1\le r\le\infty$, then
          \[
            \|T F\|_{\mathrm{L}^{n,r}(\R^2)} \lesssim \|F\|_{\mathrm{L}^{m,r}(\R^2)},
            \qquad \frac1n = \frac1m - \frac\alpha2.
          \]
    \item If $F\in \mathrm{L}^{2/\alpha,1}(\R^2)$, then
          \[
            \|T F\|_{\mathrm{L}^\infty(\R^2)} \lesssim \|F\|_{\mathrm{L}^{2/\alpha,1}(\R^2)}.
          \]
    \item If $F\in\mathrm L^m(\R^2;\R^{2\times2})$ with
          $m>2/\alpha$, then $TF$ is defined modulo constants and
          \[
            [TF]_{\Cdot^{0,\alpha-2/m}(\R^2)}
            \lesssim
            \|F\|_{\mathrm{L}^m(\R^2)}.
          \]
    \item Let $0<\beta<1$ and $\beta+\alpha\ne1$. Suppose that
          $v\in\mathrm L^\infty(\R^2;\R^2)$ satisfies
          \[
            v=-TF
            \qquad\text{in }\mathcal S'(\R^2;\R^2)/\mathcal P.
          \]
          If $F\in \mathrm C_b^{0,\beta}(\R^2;\R^{2\times2})$, then
          \[
            \|v\|_{\mathrm C_b^{0,\beta+\alpha}}
            \lesssim
            \|v\|_{\mathrm L^\infty}
            +
            \|F\|_{\mathrm C_b^{0,\beta}}
            \qquad\text{when } \beta+\alpha<1,
          \]
          while
          \[
            \|v\|_{\mathrm C_b^{1,\beta+\alpha-1}}
            \lesssim
            \|v\|_{\mathrm L^\infty}
            +
            \|F\|_{\mathrm C_b^{0,\beta}}
            \qquad\text{when } \beta+\alpha>1.
          \]
  \end{enumerate}
\end{lemma}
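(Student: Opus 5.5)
The plan is to treat \(T\) as a convolution-type operator whose multiplier \(M\) is smooth away from the origin and homogeneous of degree \(-\alpha\), with \(\alpha=2s-1\in(0,1)\). Such a multiplier is the Fourier transform of a kernel \(K\in \C^\infty(\R^2\setminus\{0\})\) that is homogeneous of degree \(\alpha-2\). Concretely, I would write \(M_{i\ell j}(\xi)=\xi_j\,m_{i\ell}(\xi)\), where \(m_{i\ell}(\xi)=i|\xi|^{-2s}(\delta_{i\ell}-\xi_i\xi_\ell/|\xi|^2)\) has degree \(-2s\in(-2,-1)\). Then \(TF=\partial_j(k_{i\ell}*F_{\ell j})\), and \(k_{i\ell}\) is homogeneous of degree \(2s-2\in(-1,0)\), hence locally integrable. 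Since \(2s-2>-2\), the derivative is also locally integrable away from \(0\), and \(K=\nabla k\) is homogeneous of degree \(2s-3=\alpha-2\). Because \(\alpha-2\in(-2,-1)\), \(K\) is locally integrable, so \(TF=K*F\) with \(|K(x)|\lesssim|x|^{\alpha-2}\) and \(|\nabla K(x)|\lesssim|x|^{\alpha-3}\). All four statements then reduce to estimates for this Riesz-potential-type kernel.

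For (a) and (b), the pointwise bound \(|TF|\le C\,I_\alpha|F|\) with \(I_\alpha f=|\cdot|^{\alpha-2}*f\) suffices. For (a), I would invoke the Hardy–Littlewood–Sobolev inequality in Lorentz form: \(|x|^{\alpha-2}\in \mathrm L^{2/(2-\alpha),\infty}\), and O'Neil's convolution inequality (the same reference as \cref{eq:lorentz-holder}) gives \(\mathrm L^{m,r}\to\mathrm L^{n,r}\) with \(1/n=1/m-\alpha/2\) for \(1<m<2/\alpha\). For (b), I would use the Lorentz Hölder duality \(\mathrm L^{2/\alpha,1}\times\mathrm L^{2/(2-\alpha),\infty}\to\mathrm L^1\). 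This gives \(|K*F(x)|\le\|K\|_{\mathrm L^{2/(2-\alpha),\infty}}\|F\|_{\mathrm L^{2/\alpha,1}}\) uniformly in \(x\).

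For (c), with \(m>2/\alpha\), the integral \(K*F\) may diverge at infinity, so I would define \(TF\) modulo constants as
\[
  TF(x)=\int_{\R^2}\bigl(K(x-y)-K(-y)\mathds 1_{|y|>1}\bigr)F(y)\,\mathrm dy,
\]
and verify that its Fourier transform agrees with \(MF\) off the origin. The Hölder bound is the standard Morrey–Campanato splitting. Set \(h=|x-x'|\) and \(B=B_{2h}(x)\). On \(B\), I would bound \(\int_B(|K(x-y)|+|K(x'-y)|)|F|\) by Hölder using \(\||\cdot|^{\alpha-2}\|_{\mathrm L^{m'}(B_{3h})}\lesssim h^{\alpha-2/m}\); this requires \((\alpha-2)m'>-2\), which is equivalent to \(m>2/\alpha\). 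Off \(B\), the mean value theorem gives \(|K(x-y)-K(x'-y)|\lesssim h|x-y|^{\alpha-3}\), and \(\|h|\cdot|^{\alpha-3}\|_{\mathrm L^{m'}(\{|z|>2h\})}\lesssim h^{\alpha-2/m}\), which holds since \(\alpha-2/m<1\).

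Part (d) is the main obstacle, because \(F\) is only bounded and Hölder, not integrable, so the convolution does not converge absolutely. I would localize with a Littlewood–Paley decomposition. For the high frequencies, the multiplier applied to \(\Delta_kF\), \(k\ge0\), gains \(2^{-k\alpha}\), so \(\|\Delta_k TF\|_{\mathrm L^\infty}\lesssim2^{-k(\alpha+\beta)}\|F\|_{\C^{0,\beta}_b}\). Here I use that \(\mathrm C^{0,\beta}_b=B^\beta_{\infty,\infty}\), and that \(\mathrm C^{0,\beta+\alpha}_b\) or \(\mathrm C^{1,\beta+\alpha-1}_b\) is \(B^{\beta+\alpha}_{\infty,\infty}\) precisely when \(\beta+\alpha\ne1\). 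The low frequencies are the delicate part, since \(M\) is singular at \(0\). Instead of estimating them, I would control the low-frequency block of \(v\) directly by \(\|S_0v\|_\infty\lesssim\|v\|_\infty\). The identity \(v=-TF\) modulo polynomials gives \(\Delta_kv=-\Delta_kTF\) exactly for \(k\ge0\), because the annular cutoffs kill polynomials. Summing yields \(\|v\|_{B^{\beta+\alpha}_{\infty,\infty}}\lesssim\|v\|_\infty+\|F\|_{\C^{0,\beta}_b}\), and the Besov–Hölder identification gives the two cases of (d).
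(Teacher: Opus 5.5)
Your proposal is correct. Part (d) is essentially the paper's own argument: nonhomogeneous Littlewood--Paley blocks, and $\Delta_k v=-\Delta_k TF$ for $k\ge0$ because the annular blocks annihilate polynomials. The $2^{-k\alpha}$ gain comes from the homogeneity of $M$, the low-frequency block is controlled by $\|v\|_{\mathrm L^\infty}$, and one concludes with the Besov--H\"older identification when $\beta+\alpha\ne1$.

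For (a)--(c), however, you take a different route. The paper regards $T$ as the Riesz potential of order $\alpha$ composed with the zeroth-order Calder\'on--Zygmund operator $(-\Delta)^{-1/2}\PP\operatorname{div}$. It then simply cites the Hardy--Littlewood--Sobolev and endpoint Lorentz estimates, Calder\'on--Zygmund boundedness (on Lorentz spaces), and a Morrey--Sobolev estimate. You instead compute the kernel of $T$ itself: $K=\nabla k$ is smooth off the origin and homogeneous of degree $\alpha-2>-2$. This is exactly where $s>\frac12$ is used, since it keeps $K$ locally integrable and no principal value is needed. The result is the pointwise domination $|TF|\lesssim I_\alpha|F|$, which removes the singular-integral step altogether:
\begin{itemize}
\item (a) follows from O'Neil's convolution inequality with $|x|^{\alpha-2}\in\mathrm L^{2/(2-\alpha),\infty}$;
\item (b) follows from the $\mathrm L^{2/\alpha,1}$--$\mathrm L^{2/(2-\alpha),\infty}$ duality;
\item (c) follows from a self-contained Campanato splitting based on $|\nabla K|\lesssim|x|^{\alpha-3}$, which also produces an explicit renormalized representative of $TF$ modulo constants, something the paper only asserts.
\end{itemize}
The cost is that you must justify the kernel structure of $K$. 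You must also check that $K*F$ (or its renormalization) agrees with the Fourier-multiplier definition of $T$ modulo polynomials. The paper's factorization gains brevity by delegating both points to standard theorems.
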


\begin{proof}
  Parts (a)--(b) are the Hardy--Littlewood--Sobolev and endpoint Lorentz estimates
  for Riesz potentials, combined with the boundedness of Calder\'on--Zygmund
  operators; see \cite[Theorem~4.10]{MR223874} and \cite[Theorem~2.6]{MR146673}.
  Part (c) is the corresponding Morrey--Sobolev estimate for potentials of order $\alpha$, again composed with
  Calder\'on--Zygmund operators; see \cite[Theorem~2.2]{MR1428685}.

  For background on Besov spaces, Littlewood--Paley decompositions, and the Besov--Hölder and multiplier estimates used below, we refer the reader to \cite[Sections~2.2 and~2.4.2]{MR2463316}. Let
  \[
    \sigma\coloneqq \beta+\alpha.
  \]
  The symbol of $T=(-\Delta)^{-s}\PP\operatorname{div}$ is smooth away from the origin and
  homogeneous of degree $-\alpha$. Let $(\Delta_j)_{j\ge -1}$ be a
  nonhomogeneous dyadic decomposition. Since
  \[
    v+TF\in\mathcal P
  \]
  and $\Delta_j$ kills polynomials for all $j\ge0$, we have
  \[
    \Delta_j v=-\Delta_j TF,
    \qquad j\ge0.
  \]
  The Fourier multiplier theorem in Besov spaces gives, for $j\ge0$,
  \[
    \|\Delta_j TF\|_{\mathrm L^\infty}
    \lesssim
    2^{-j\alpha}\|\widetilde\Delta_j F\|_{\mathrm L^\infty},
  \]
  where $\widetilde\Delta_j$ denotes a harmless enlargement of $\Delta_j$.
  Hence,
  \[
    2^{j\sigma}\|\Delta_j v\|_{\mathrm L^\infty}
    \lesssim
    2^{j\beta}\|\widetilde\Delta_j F\|_{\mathrm L^\infty},
    \qquad j\ge0.
  \]
  The low frequency is controlled by the assumed boundedness of $v$:
  \[
    \|\Delta_{-1}v\|_{\mathrm L^\infty}
    \lesssim
    \|v\|_{\mathrm L^\infty}.
  \]
  Therefore
  \[
    \|v\|_{\mathrm B_{\infty,\infty}^{\sigma}}
    \lesssim
    \|v\|_{\mathrm L^\infty}
    +
    \|F\|_{\mathrm B_{\infty,\infty}^{\beta}}.
  \]
  Since $0<\beta<1$, we have
  \[
    \mathrm B_{\infty,\infty}^{\beta}(\R^2)=\mathrm C_b^{0,\beta}(\R^2).
  \]
  If $\sigma<1$, then
  \[
    \mathrm B_{\infty,\infty}^{\sigma}(\R^2)
    =
    \mathrm C_b^{0,\sigma}(\R^2),
  \]
  while if $\sigma>1$ and $\sigma\notin\mathbb Z$, then
  \[
    \mathrm B_{\infty,\infty}^{\sigma}(\R^2)
    =
    \mathrm C_b^{1,\sigma-1}(\R^2).
  \]
  This proves the asserted estimates.
\end{proof}

We also use the boundedness of the modulus map below
\(\mathrm H^{3/2}\), established in \cite{MR3949429}, to obtain the following lemma.

\begin{lemma}[Truncation below $\mathrm H^{3/2}$]\label{lem:clamp}
  Let $0<\sigma<1/2$, $f\in \mathrm H^{1+\sigma}(\R^2)$, and
  $a>0$. Define the truncation
  \[
    \mathcal C_a(f)(x)\coloneqq
    \begin{cases}
      -a,   & f(x)<-a,     \\
      f(x), & |f(x)|\le a, \\
      a,    & f(x)>a.
    \end{cases}
  \]
  Then $\mathcal C_a(f)\in \mathrm H^{1+\sigma}(\R^2)$ and
  \begin{equation}\label{eq:clamp-bound}
    \|\mathcal C_a(f)\|_{\mathrm H^{1+\sigma}}
    \leq C_\sigma\|f\|_{\mathrm H^{1+\sigma}},
  \end{equation}
  where $C_\sigma$ is independent of $a$.
\end{lemma}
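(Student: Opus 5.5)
The plan is to write the truncation as a composition with a fixed Lipschitz profile and invoke the result of \cite{MR3949429}. That result asserts that the modulus map \(f\mapsto|f|\) is bounded on \(\mathrm H^{1+\sigma}(\R^2)\) for \(0<\sigma<1/2\), i.e.\ \(\||f|\|_{\mathrm H^{1+\sigma}}\le C_\sigma\|f\|_{\mathrm H^{1+\sigma}}\). The starting point is the elementary pointwise identity
\[
  \mathcal C_a(f)=\tfrac12\bigl(|f+a|-|f-a|\bigr),
\]
valid for every real value of \(f\). The difficulty is that \(f\pm a\notin \mathrm H^{1+\sigma}\) when \(a\neq0\), since constants are not in \(\mathrm L^2\). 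So the modulus bound cannot be applied to \(f\pm a\) directly. This is the main obstacle, and I would handle it by splitting the identity differently.

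First, write
\[
  \mathcal C_a(f)=f-(f-a)_+ + (-f-a)_+ ,
\]
which holds pointwise. Then use \(t_+=\tfrac12(|t|+t)\), so that
\[
  (f-a)_+=\tfrac12\bigl(|f-a|-|a|\bigr)+\tfrac12(f-a)+\tfrac12|a|
\]
after rearrangement. Since \(f\in\mathrm H^{1+\sigma}\subset C_0(\R^2)\) (recall \(1+\sigma>1=n/2\)), the set \(\{|f|>a\}\) is compact. Hence \((f-a)_+\) and \((-f-a)_+\) are compactly supported and continuous, so it suffices to bound their \(\mathrm H^{1+\sigma}\)-norms by \(C_\sigma\|f\|_{\mathrm H^{1+\sigma}}\).

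For this I would localize. Choose \(\chi\in C_c^\infty\) with \(\chi\equiv1\) on a neighbourhood of \(\{|f|\ge a\}\) and write \((f-a)_+=(\chi f-a\chi)_+\). This is only a qualitative step and does not yet give uniformity in \(a\). A cleaner route is the scaling reduction: \(\mathcal C_a(f)=a\,\mathcal C_1(f/a)\), and both sides of \eqref{eq:clamp-bound} are \(1\)-homogeneous under \(f\mapsto \lambda f\), \(a\mapsto\lambda a\). This reduces the problem to showing, uniformly in \(f\), that \(g\coloneqq f-\mathcal C_1(f)=(f-1)_+-(-f-1)_+\) satisfies \(\|g\|_{\mathrm H^{1+\sigma}}\lesssim\|f\|_{\mathrm H^{1+\sigma}}\).

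For the \(\mathrm L^2\)- and \(\mathrm H^1\)-parts this is immediate, since \(|g|\le|f|\) and \(|\nabla g|\le|\nabla f|\) a.e. For the \(\Hdot^{1+\sigma}\)-seminorm, i.e.\ the \(\Hdot^\sigma\)-seminorm of \(\nabla g=\nabla f\,\mathbf 1_{\{|f|>1\}}\), I would reproduce the mechanism of \cite{MR3949429}. The key input there is that multiplying \(\nabla f\) by the indicator \(\mathbf 1_{\{f>0\}}\) of a superlevel set is bounded in \(\Hdot^\sigma\) for \(\sigma<1/2\), uniformly with respect to the level. This holds because the Gagliardo double integral across the level-set boundary is controlled via the coarea formula together with \(f\in\mathrm H^{1+\sigma}\). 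Applying that estimate at the levels \(\pm1\), or equivalently applying the modulus theorem to \(f\mp1\) after observing that its proof only uses \(\nabla f\in\mathrm H^\sigma\) and not \(f\in\mathrm L^2\), gives \([\nabla g]_{\Hdot^\sigma}\le C_\sigma[\nabla f]_{\Hdot^\sigma}\).

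Collecting the pieces gives \eqref{eq:clamp-bound} with a constant independent of \(a\). The step I expect to require care is precisely this justification that the modulus estimate is homogeneous, i.e.\ depends only on \(\nabla f\in\Hdot^\sigma\), so that it applies to the shifted functions \(f\mp a\).
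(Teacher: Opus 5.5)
Your reductions are sound: the identity $\mathcal C_a(f)=f-(f-a)_{+}+(-f-a)_{+}$, the compactness of $\{|f|\ge a\}$ from $f\in C_0(\R^2)$, the rescaling to $a=1$, and the $\mathrm L^2$- and $\mathrm H^1$-bounds for $g=f-\mathcal C_1(f)$ are all correct. The gap is the one step that carries the content of the lemma, the bound $[\nabla f\,\mathbf 1_{\{|f|>1\}}]_{\Hdot^{\sigma}}\lesssim[\nabla f]_{\Hdot^\sigma}$. You give two justifications. One is a loosely sketched superlevel-set/coarea mechanism attributed to \cite{MR3949429}. The other is the assertion that the proof of the modulus theorem ``only uses $\nabla f\in\mathrm H^\sigma$'' and therefore applies to $f\mp1\notin\mathrm L^2$. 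Neither is an argument. The cited estimate concerns $g\in\mathrm H^{1+\sigma}$. Its scale-invariant form $[|g|]_{\Hdot^{1+\sigma}}\le C_\sigma[g]_{\Hdot^{1+\sigma}}$ is not the issue, since it follows from dilations. The issue is applying it to functions that differ from an $\mathrm H^{1+\sigma}$ function by a non-zero constant, with a bound uniform in that constant. Resting this on an unverified claim about the internals of someone else's proof leaves the lemma open, as your last sentence anticipates.

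The missing idea is short, and you were one step from it when you localized with $\chi$ and set that aside as merely qualitative: take the cutoff at a large scale. Let $\eta_R=\eta(\cdot/R)$ with $\eta\equiv1$ on $B_1$. Two-dimensional scaling gives $[a\eta_R]_{\Hdot^{1+\sigma}}=aR^{-\sigma}[\eta]_{\Hdot^{1+\sigma}}\to0$, so $a\eta_R\in\mathrm H^{1+\sigma}$ replaces the constant $a$ at vanishing cost. The paper applies the black-box bound to $f\pm a\eta_R\in\mathrm H^{1+\sigma}$ and gets $[|f\pm a\eta_R|-a\eta_R]_{\Hdot^{1+\sigma}}\le C_\sigma[f]_{\Hdot^{1+\sigma}}+C_{\sigma,\eta}aR^{-\sigma}$. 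Since $\bigl||f\pm a\eta_R|-a\eta_R\bigr|\le|f|$, these functions converge in $\mathrm L^2$ to $|f\pm a|-a$. Weak lower semicontinuity then yields $[|f\pm a|-a]_{\Hdot^{1+\sigma}}\le C_\sigma[f]_{\Hdot^{1+\sigma}}$, and the identity $\mathcal C_a(f)=\tfrac12\bigl((|f+a|-a)-(|f-a|-a)\bigr)$ concludes. In your own setup even the weak limit can be skipped. Once $\{f\ge a\}\subset B_R$, one has $(f-a)_{+}=\tfrac12\bigl(|g_R|+g_R\bigr)$ with $g_R=\eta_R(f-a)\in\mathrm H^{1+\sigma}$. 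Moreover $[g_R]_{\Hdot^{1+\sigma}}\le[\eta_Rf]_{\Hdot^{1+\sigma}}+aR^{-\sigma}[\eta]_{\Hdot^{1+\sigma}}\to[f]_{\Hdot^{1+\sigma}}$, because $\eta_Rf\to f$ in $\mathrm H^{1+\sigma}$. This bound is uniform in $a$ without any rescaling.
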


\begin{proof}
  Choose \(0\leq\eta\in\mathrm C_c^\infty(\R^2)\) such that
  \(\eta=1\) on \(B_1\), and set \(\eta_R(x)=\eta(x/R)\). By scaling,
  \[
    [\eta_R]_{\Hdot^{1+\sigma}}
    =R^{-\sigma}[\eta]_{\Hdot^{1+\sigma}}.
  \]
  For \(a>0\), define
  \[
    h_R^\pm\coloneqq |f\pm a\eta_R|-a\eta_R.
  \]
  We use the absolute-value estimate
  \[
    [|g|]_{\Hdot^{1+\sigma}}
    \leq C_\sigma[g]_{\Hdot^{1+\sigma}},
    \qquad
    g\in\mathrm H^{1+\sigma}(\R^2),
  \]
  from \cite[Introduction, Item~3]{MR3949429}. Applying it to
  \(g=f\pm a\eta_R\) and using the triangle inequality, we obtain
  \[
    [h_R^\pm]_{\Hdot^{1+\sigma}}
    \leq
    C_\sigma[f]_{\Hdot^{1+\sigma}}
    +C_{\sigma,\eta}aR^{-\sigma}.
  \]
  The reverse triangle inequality also gives
  \[
    |h_R^\pm|\leq |f|.
  \]
  Since \(\eta_R\to1\) pointwise, Lebesgue's dominated convergence theorem yields
  \[
    h_R^\pm\longrightarrow |f\pm a|-a
    \qquad\text{in }\mathrm L^2(\R^2).
  \]
  The preceding estimates make \(h_R^\pm\) bounded in
  \(\mathrm H^{1+\sigma}(\R^2)\). After passing to a weakly convergent
  subsequence, the strong \(\mathrm L^2\)-convergence identifies its
  weak limit as \(|f\pm a|-a\). Hence weak lower semicontinuity gives
  \[
    [|f\pm a|-a]_{\Hdot^{1+\sigma}}
    \leq C_\sigma[f]_{\Hdot^{1+\sigma}},
  \]
  where \(C_\sigma\) is independent of \(a\).

  Finally, the pointwise identities
  \[
    \mathcal C_a(f)
    =
    \frac12\bigl((|f+a|-a)-(|f-a|-a)\bigr),
    \qquad
    |\mathcal C_a(f)|\leq |f|,
  \]
  imply
  \[
    [\mathcal C_a(f)]_{\Hdot^{1+\sigma}}
    \leq C_\sigma[f]_{\Hdot^{1+\sigma}},
    \qquad
    \|\mathcal C_a(f)\|_{\mathrm L^2}
    \leq \|f\|_{\mathrm L^2}.
  \]
  This proves \cref{eq:clamp-bound}.
\end{proof}

\section{\texorpdfstring{Stream-function truncation for $0<s<\frac12$}{Stream-function truncation for 0<s<1/2}}
\label{sec:stream-range}

The argument applies for every \(0<s<\frac12\), although the main theorem
uses it only below \(s=\frac13\); the overlapping range is also covered by
\cref{sec:proof-sub}. The projected equation first yields
\(u\in\mathrm L^2(\R^2)\); then it is used a second time to control the stream function at zero frequency and obtain a decaying
\(\psi\in\mathrm H^{1+s}(\R^2)\). Truncating the values of \(\psi\) produces
compactly supported, divergence-free test fields for which both the pressure
and convection terms vanish. Removing the truncation recovers the full
\(\Hdot^s\)-energy and forces \(u\equiv0\).

\begin{lemma}[$\mathrm L^2$ gain]\label{lem:L2gain}
  Let $0<s<1/2$.  If $u\in\Hdot^s(\R^2;\R^2)$ is divergence free and
  satisfies \cref{eq:projected-equation},
  then $u\in\mathrm L^2(\R^2;\R^2)$.
\end{lemma}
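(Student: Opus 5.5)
The plan is to read off \(u\in\mathrm L^2\) from the projected equation \cref{eq:projected-equation} by inverting \((-\Delta)^s\) and applying Hardy--Littlewood--Sobolev in the \emph{negative} direction. For \(0<s<\frac12\) the operator \(T=(-\Delta)^{-s}\PP\operatorname{div}\) has symbol homogeneous of degree \(1-2s>0\), so it is a derivative of positive order. We therefore write
\[
  u=-(-\Delta)^{-s}\PP\operatorname{div}(u\otimes u)
  =-\PP\operatorname{div}\,(-\Delta)^{-1/2}\,(-\Delta)^{1/2-s}\ldots
\]
which is awkward. Instead we distribute derivatives differently. By Sobolev embedding, \(u\in\mathrm L^{q_s}\) with \(q_s=\frac{2}{1-s}\), so \(u\otimes u\in\mathrm L^{1/(1-s)}\). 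Moreover \(u\in\Hdot^s\) gives fractional regularity of the product: by the fractional Leibniz (Kato--Ponce) rule,
\[
  \|(-\Delta)^{s/2}(u\otimes u)\|_{\mathrm L^{m}}\lesssim \|(-\Delta)^{s/2}u\|_{\mathrm L^2}\|u\|_{\mathrm L^{q_s}},
  \qquad \frac1m=\frac12+\frac{1-s}{2}=1-\frac s2 .
\]

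With this, we express \(u\) as \(u=-\mathcal K\,(-\Delta)^{s/2}(u\otimes u)\), where \(\mathcal K=(-\Delta)^{-3s/2}\PP\operatorname{div}\) has symbol homogeneous of degree \(1-3s\). Two regimes arise.

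\emph{Case \(s<\frac13\).} Here \(\mathcal K\) is again of positive order. This is only acceptable if we measure the input in a space with more regularity, so the cleaner route is a Fourier/Plancherel argument. We aim to show \(\||\xi|^{1-2s}\widehat{u\otimes u}\|_{\mathrm L^2}<\infty\), that is, \(u\otimes u\in\Hdot^{1-2s}\). Since \(1-2s>s\), this needs an iteration. Each use of the equation gains \(2s-1\) derivatives from the degree of \(T\); this is a loss when \(s<\frac12\). So iterating high frequencies does not help.

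\emph{The actual mechanism.} The obstruction is really at \emph{low} frequencies, because \(u\in\Hdot^s\) already controls \(\int|\xi|^{2s}|\widehat u|^2\). To get \(\widehat u\in\mathrm L^2\) it suffices to bound \(\int_{|\xi|\le1}|\widehat u|^2\,\mathrm d\xi\). On \(|\xi|\le1\) we use the equation:
\[
  |\widehat u(\xi)|\le |\xi|^{1-2s}\,|\widehat{u\otimes u}(\xi)|.
\]
Since \(u\otimes u\in\mathrm L^{1/(1-s)}\) with \(1/(1-s)\in(1,2)\), Hausdorff--Young gives \(\widehat{u\otimes u}\in\mathrm L^{1/s}\). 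Then Hölder on the unit ball gives
\[
  \int_{|\xi|\le1}|\xi|^{2-4s}|\widehat{u\otimes u}|^2\,\mathrm d\xi
  \le \|\widehat{u\otimes u}\|_{\mathrm L^{1/s}}^2\Bigl(\int_{|\xi|\le1}|\xi|^{(2-4s)/(1-2s)}\,\mathrm d\xi\Bigr)^{1-2s},
\]
using \(s<\frac12\), so that \(1/s>2\) and the conjugate exponent is \(\frac1{1-2s}\). The inner integrand is \(|\xi|^{2}\), which is integrable. High frequencies satisfy \(\int_{|\xi|>1}|\widehat u|^2\le[u]_{\Hdot^s}^2\). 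Combining both pieces, \(\widehat u\in\mathrm L^2\).

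The key rigour point, and the main obstacle, is the identification \(|\xi|^{2s}\widehat u=\text{symbol}\cdot\widehat{u\otimes u}\) as functions on \(\R^2\setminus\{0\}\). This is legitimate because \(\widehat u\) is locally integrable: \(u\in\mathrm L^{q_s}\) with \(q_s\in(2,4)\), so \(\widehat u\) is a distribution which, away from the origin, equals \(|\xi|^{-2s}\) times an \(\mathrm L^{1/s}\) function, hence it is locally \(\mathrm L^1\) there. A possible point mass at \(\xi=0\) must also be ruled out. Any such component is a polynomial, and a polynomial lying in \(\mathrm L^{q_s}\) must vanish, by the same argument as \cref{lem:polynomial-remainder}. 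Then \(u\in\mathrm L^2\) follows by Plancherel.
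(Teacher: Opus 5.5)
Your final argument (the part you label \emph{the actual mechanism}) is essentially the paper's proof. Hausdorff--Young gives \(\widehat{u\otimes u}\in\mathrm L^{1/s}\), the Fourier form of the projected equation gives \(|\widehat u(\xi)|\lesssim|\xi|^{1-2s}|\widehat{u\otimes u}(\xi)|\), which puts \(\widehat u\) in \(\mathrm L^2(B_1)\) since \(1/s>2\), and the \(\Hdot^s\) energy handles \(|\xi|\ge1\). The paper simply uses \(|\xi|^{1-2s}\le1\) on \(B_1\) together with \(\mathrm L^{1/s}(B_1)\subset\mathrm L^2(B_1)\) instead of your weighted H\"older step; your earlier Kato--Ponce and iteration discussion is not needed, and the point-mass remark is harmless extra care.
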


\begin{proof}
  \uline{Step 1:} \emph{Fourier control furnished by the equation.}
  Set \(q_s=2/(1-s)\). By \cref{eq:SobolevEmbedding} and
  Hausdorff--Young,
  \[
    u\otimes u\in\mathrm L^{q_s/2},
    \qquad
    \widehat{u\otimes u}\in\mathrm L^{1/s}.
  \]
  Taking the Fourier transform of \cref{eq:projected-equation}, we obtain for
  almost every $\xi\in\R^2\setminus\{0\}$
  \begin{equation}\label{eq:u-fourier-low}
    \widehat u_i(\xi)
    =
    -i|\xi|^{-2s}
    \left(\delta_{ik}-\frac{\xi_i\xi_k}{|\xi|^2}\right)
    \xi_j\widehat{u_k u_j}(\xi),
    \qquad
    |\widehat u(\xi)|
    \leq
    C|\xi|^{1-2s}
    |\widehat{u\otimes u}(\xi)|.
  \end{equation}

  \uline{Step 2:} \emph{Square integrability.} Since \(s<1/2\), the multiplier on the right is bounded on \(B_1\),
  and \(1/s>2\). Hence
  \[
    \widehat u\in\mathrm L^{1/s}(B_1)\subset\mathrm L^2(B_1).
  \]
  At high frequencies, the homogeneous energy gives
  \[
    \int_{\R^2\setminus B_1}|\widehat u|^2\,\mathrm d\xi
    \le \int_{\R^2\setminus B_1}|\xi|^{2s}|\widehat u|^2\,\mathrm d\xi
    <\infty.
  \]
  Thus \(\widehat u\in\mathrm L^2\), and Plancherel yields
  \(u\in\mathrm L^2(\R^2;\R^2)\).
\end{proof}

\begin{lemma}[Stream function]\label{lem:streamfunction}
  Let $0<s<1/2$, and let
  $u\in\Hdot^s(\R^2;\R^2)$ be divergence free and satisfy
  \cref{eq:projected-equation}. Then there exists a real-valued function
  $\psi\in\mathrm H^{1+s}(\R^2)$ such that
  \begin{equation}\label{eq:stream}
    u=\nabla^\perp\psi
    \qquad\text{a.e. in }\R^2.
  \end{equation}
  Moreover, $\psi\in\mathrm C^0(\R^2)$ and
  \begin{equation}\label{eq:psi-decay}
    \psi(x)\to 0
    \qquad\text{as }|x|\to\infty.
  \end{equation}
\end{lemma}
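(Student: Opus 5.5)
Define \(\psi\) on the Fourier side and show it lies in \(\mathrm H^{1+s}\). Since \(\operatorname{div}u=0\), we have \(\xi\cdot\widehat u(\xi)=0\) for a.e.\ \(\xi\), so \(\widehat u(\xi)\) is parallel to \(\xi^\perp\). Set
\[
  \widehat\psi(\xi)\coloneqq \frac{-i\,\xi^\perp\cdot\widehat u(\xi)}{|\xi|^2},
  \qquad \xi\neq0 ,
\]
with the sign chosen so that \(i\xi^\perp\widehat\psi=\widehat u\), that is, \(u=\nabla^\perp\psi\). Then \(|\widehat\psi(\xi)|=|\widehat u(\xi)|/|\xi|\), and \(\widehat\psi(-\xi)=\overline{\widehat\psi(\xi)}\) because \(u\) is real; hence \(\psi\) is real-valued.

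For the high-frequency part, \((1+|\xi|^2)^{1+s}|\widehat\psi|^2\lesssim |\xi|^{2s}|\widehat u|^2\) on \(|\xi|\ge1\), which is integrable since \(u\in\Hdot^s\). The key issue is the low-frequency part, where we need \(\int_{B_1}|\widehat u|^2|\xi|^{-2}\,\mathrm d\xi<\infty\). Here I use the equation a second time, as announced. By \cref{eq:u-fourier-low}, \(|\widehat u(\xi)|/|\xi|\le C|\xi|^{-2s}|\widehat{u\otimes u}(\xi)|\). Now \cref{lem:L2gain} gives \(u\in\mathrm L^2\), so \(u\otimes u\in\mathrm L^1\) and \(\widehat{u\otimes u}\in\mathrm L^\infty\). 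Consequently
\[
  \int_{B_1}|\widehat\psi|^2\,\mathrm d\xi\lesssim \|u\|_{\mathrm L^2}^4\int_{B_1}|\xi|^{-4s}\,\mathrm d\xi<\infty,
\]
because \(4s<2\) exactly when \(s<\tfrac12\). This is the step that uses \(s<1/2\) essentially, and it is the main obstacle: without the nonlinear structure, \(u\in\mathrm L^2\) alone would not control \(\widehat u/|\xi|\) near the origin. If one wanted to avoid the \(\mathrm L^\infty\) bound, the Hausdorff--Young bound \(\widehat{u\otimes u}\in\mathrm L^{1/s}\) from Step 1 together with Hölder also works when \(s<1/2\). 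We conclude that \(\psi\in\mathrm L^2\) and \(\psi\in\mathrm H^{1+s}\). Moreover \(\nabla^\perp\psi=u\) holds in \(\mathcal S'\), and hence a.e.\ since both sides are \(\mathrm L^2\) functions, which proves \cref{eq:stream}.

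For continuity and decay, I would show \(\widehat\psi\in\mathrm L^1(\R^2)\). By Cauchy--Schwarz,
\[
  \int|\widehat\psi|\,\mathrm d\xi\le \|\psi\|_{\mathrm H^{1+s}}\Bigl(\int(1+|\xi|^2)^{-1-s}\,\mathrm d\xi\Bigr)^{1/2},
\]
and the last integral is finite since \(2+2s>2\); this is simply the embedding \(\mathrm H^{1+s}(\R^2)\hookrightarrow \mathcal F\mathrm L^1\) for \(s>0\). By the Riemann--Lebesgue lemma, \(\psi\) (after choosing its continuous representative) belongs to \(\mathrm C_0(\R^2)\). This yields \(\psi\in\mathrm C^0(\R^2)\) together with \cref{eq:psi-decay}.
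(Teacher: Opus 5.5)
Your proof is correct and follows the paper's argument essentially step for step: the same Fourier definition of $\psi$, low-frequency control via $u\in\mathrm L^2$ and $\widehat{u\otimes u}\in\mathrm L^\infty$ against $\int_{B_1}|\xi|^{-4s}\,\mathrm d\xi<\infty$, then $\widehat\psi\in\mathrm L^1$ and Riemann--Lebesgue. Your parenthetical alternative is wrong, though: with only $\widehat{u\otimes u}\in\mathrm L^{1/s}$, H\"older requires $|\xi|^{-4s}\in\mathrm L^{1/(1-2s)}(B_1)$, which holds only for $s<\frac14$, so the $\mathrm L^\infty$ bound (and hence \cref{lem:L2gain}) is genuinely needed for $\frac14\le s<\frac12$.
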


\begin{proof}
  \uline{Step 1:} \emph{Construction of the stream function.}
  By \cref{lem:L2gain}, $u\in\mathrm L^2(\R^2;\R^2)$.  For $\xi\neq0$,
  define
  \begin{equation}\label{eq:psi-def}
    \widehat\psi(\xi)
    :=
    -i\frac{\xi^\perp\cdot\widehat u(\xi)}{|\xi|^2}.
  \end{equation}
  Set $\widehat\psi(0)=0$.

  \uline{Step 2:} \emph{Square integrability at high and low frequencies.} Since $\operatorname{div} u=0$,
  \[
    \xi\cdot\widehat u(\xi)=0
    \ \text{for a.e. }\xi \quad
    \text{ and } \quad
    i\xi^\perp\widehat\psi(\xi)=\widehat u(\xi)
    \ \text{for a.e. }\xi\neq0.
  \]
  For $|\xi|\geq1$,
  \[
    |\widehat\psi(\xi)|
    \leq
    |\xi|^{-1}|\widehat u(\xi)|
    \leq
    |\widehat u(\xi)|,
  \]
  and hence the high-frequency part of $\widehat\psi$ belongs to
  $\mathrm L^2$.

  Since $u\in\mathrm L^2$, we have $u\otimes u\in\mathrm L^1$ and
  \[
    \|\widehat{u\otimes u}\|_{\mathrm L^\infty}
    \leq
    C\|u\otimes u\|_{\mathrm L^1}
    \leq
    C\|u\|_{\mathrm L^2}^2.
  \]
Taking the scalar product of the Fourier transform of
  \cref{eq:projected-equation} with
  \(-i\xi^\perp/|\xi|^2\) gives, for almost every \(\xi\neq0\), 
  \begin{equation}\label{eq:psi-equation-fourier}
    |\xi|^{2s}\widehat\psi(\xi)
    =
    -\frac{\xi_i^\perp\xi_j}{|\xi|^2}
    \widehat{u_i u_j}(\xi).
  \end{equation}
  We then deduce
  \[
    |\widehat\psi(\xi)|
    \leq
    C|\xi|^{-2s}\|u\|_{\mathrm L^2}^2,
    \qquad |\xi|\leq1.
  \]
    Since
  \[
    \int_{|\xi|\leq1}|\xi|^{-4s}\,\mathrm d\xi<\infty
    \quad\Longleftrightarrow\quad s<\frac12,
  \]
  \(\widehat\psi\in\mathrm L^2(\R^2)\), and hence
  $\psi\in\mathrm L^2(\R^2)$.  Since $u$ is real valued,
  $\widehat u(-\xi)=\overline{\widehat u(\xi)}$, and \cref{eq:psi-def} gives
  $\widehat\psi(-\xi)=\overline{\widehat\psi(\xi)}$. Hence $\psi$ is real
  valued. The Fourier identity above therefore proves \cref{eq:stream}
  a.e.~in $\R^2$.

  \uline{Step 3:} \emph{Regularity, continuity, and decay.}
  By \cref{eq:stream},
  \[
    \|(-\Delta)^{(1+s)/2}\psi\|_{\mathrm L^2}
    =
    \|(-\Delta)^{s/2}u\|_{\mathrm L^2}
    <\infty.
  \]
  Together with $\psi\in\mathrm L^2$, this yields
  \[
    \psi\in\mathrm H^{1+s}(\R^2).
  \]
  Furthermore, Cauchy--Schwarz gives
  \[
    \begin{aligned}
      \|\widehat\psi\|_{\mathrm L^1}
       & \leq
      \left(
      \int_{\R^2}(1+|\xi|^2)^{-1-s}\,\, \mathrm{d}\xi
      \right)^{1/2}
      \left(
      \int_{\R^2}
      (1+|\xi|^2)^{1+s}|\widehat\psi(\xi)|^2\,\, \mathrm{d}\xi
      \right)^{1/2} \\
       & \leq
      C_s\|\psi\|_{\mathrm H^{1+s}}
      <\infty.
    \end{aligned}
  \]
  Fourier inversion and the Riemann--Lebesgue lemma now imply that
  $\psi\in\mathrm C^0(\R^2)$ and that \cref{eq:psi-decay} holds.
\end{proof}

The field
\(z_\varepsilon=\nabla^\perp(\psi-\mathcal C_\varepsilon(\psi))\) is
compactly supported and divergence free, and agrees almost everywhere with
either \(u\) or \(0\). Testing the equation against this field eliminates both
the pressure and convection terms. Letting \(\varepsilon\downarrow0\) then
gives \(\|(-\Delta)^{s/2}u\|_{\mathrm L^2}=0\), which proves the result.

\begin{proposition}\label{prop:stream-range}
  Let $0<s<1/2$, let $(u,p)$ be a smooth solution of \cref{eq:NS} on
  $\R^2$, and suppose that
  \[
    u\in\Hdot^s(\R^2;\R^2).
  \]
  Then $u\equiv0$.
\end{proposition}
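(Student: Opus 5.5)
We follow the strategy announced just before the statement. By \cref{lem:pressure-normalization}, $u$ solves \cref{eq:normalized-equation} and hence \cref{eq:projected-equation}. \cref{lem:L2gain} and \cref{lem:streamfunction} then give $u\in\mathrm L^2$ and a real, continuous stream function $\psi\in\mathrm H^{1+s}(\R^2)$ with $u=\nabla^\perp\psi$ and $\psi\to0$ at infinity. Since $u$ is smooth and $\nabla^\perp\psi=u$ a.e., we may take $\psi\in\C^\infty$. For $\varepsilon>0$ set
\[
  \phi_\varepsilon\coloneqq\psi-\mathcal C_\varepsilon(\psi),
  \qquad
  z_\varepsilon\coloneqq\nabla^\perp\phi_\varepsilon .
\]
Because $\psi\to0$, the set $\{|\psi|\ge\varepsilon\}$ is compact, so $\phi_\varepsilon$ is compactly supported and Lipschitz. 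By \cref{lem:clamp} with $\sigma=s<\tfrac12$, $\phi_\varepsilon\in\mathrm H^{1+s}$ uniformly in $\varepsilon$, so $z_\varepsilon\in\mathrm H^s$ is bounded uniformly. Moreover $z_\varepsilon=u\mathds 1_{\{|\psi|>\varepsilon\}}$ a.e., because $\nabla\psi=0$ a.e. on level sets; in particular $z_\varepsilon\in\mathrm L^\infty$ with compact support, and $\operatorname{div}z_\varepsilon=0$.

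Next we test the smooth equation \cref{eq:NS} against $z_\varepsilon$. This is legitimate since $z_\varepsilon$ is bounded, compactly supported and in $\mathrm H^s$; mollifying $\phi_\varepsilon$ and passing to the limit is routine. The three terms behave as follows.
\begin{itemize}
\item \emph{Pressure.} $\int\nabla p\cdot z_\varepsilon=-\int p\,\operatorname{div}z_\varepsilon=0$.
\item \emph{Convection.} Writing $u\cdot\nabla u=\nabla(\tfrac12|u|^2)+\omega u^\perp$ with $\omega=\Delta\psi$, we get $\int\nabla\tfrac12|u|^2\cdot z_\varepsilon=0$, and $\int\omega\,u^\perp\cdot z_\varepsilon=0$ because $z_\varepsilon$ is parallel to $u$ pointwise a.e.
\item \emph{Dissipation.} Since $u\in\Hdot^s$ and $z_\varepsilon\in\mathrm H^s$, we obtain $\langle(-\Delta)^{s/2}u,(-\Delta)^{s/2}z_\varepsilon\rangle_{\mathrm L^2}=0$.
\end{itemize}
This pairing is justified because $(-\Delta)^s u$ for smooth $u\in\Hdot^s$ acts on $z_\varepsilon$ through the Fourier/bilinear form; one may check this via Plancherel, using $u\in\mathrm H^s$ by \cref{lem:L2gain}.

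Finally we let $\varepsilon\downarrow0$. The fields $z_\varepsilon$ are bounded in $\mathrm H^s$, and $z_\varepsilon\to u\mathds 1_{\{\psi\ne0\}}=u$ in $\mathrm L^2$ by dominated convergence; here $u=0$ a.e. on $\{\psi=0\}$ for the same level-set reason. Hence $z_\varepsilon\rightharpoonup u$ weakly in $\mathrm H^s$. Passing to the limit in the dissipation identity gives $\|(-\Delta)^{s/2}u\|_{\mathrm L^2}^2=0$. So $u$ is constant, and being in $\mathrm L^2$, $u\equiv0$.

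The main obstacle is the uniform $\mathrm H^s$ bound on $z_\varepsilon$, which is exactly where \cref{lem:clamp} and the restriction $s<\tfrac12$ enter. A secondary point is the rigorous justification of the testing step with the non-smooth field $z_\varepsilon$. For that we would approximate $\phi_\varepsilon$ by mollifications $\phi_\varepsilon*\rho_\delta$, which keeps divergence-free, compact support and $\mathrm H^{1+s}$ convergence. In the convection term we would use $u\in\mathrm L^2\cap\mathrm L^\infty_{\mathrm{loc}}$ and $\nabla u\in\mathrm L^\infty_{\mathrm{loc}}$ to pass $\delta\to0$ before invoking the pointwise parallelism.
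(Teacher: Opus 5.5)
Your proposal is correct and follows essentially the same route as the paper. Both arguments truncate $\psi$ at level $\varepsilon$ using \cref{lem:clamp}, test against the compactly supported divergence-free field $z_\varepsilon=\mathds 1_{\{|\psi|>\varepsilon\}}u$ so that pressure and convection drop out, and pass to the weak $\mathrm H^s$ limit $z_\varepsilon\rightharpoonup u$. The only differences are cosmetic: you bound $z_\varepsilon$ directly rather than $v_\varepsilon=u-z_\varepsilon$, and you cancel the convection term via the Lamb-vector form $\nabla\frac12|u|^2+\omega u^\perp$ instead of the paper's pointwise identity $(u\cdot\nabla u)\cdot z_\varepsilon=z_\varepsilon\cdot\nabla\frac12|u|^2$.
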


\begin{proof}
  \uline{Step 1:} \emph{Truncation along the stream-function levels.}
  \cref{lem:L2gain,lem:streamfunction} give
  \[
    u\in\mathrm L^2(\R^2;\R^2),
    \qquad
    \psi\in\mathrm H^{1+s}(\R^2),
    \qquad
    u=\nabla^\perp\psi,
    \qquad
    \psi(x)\to 0
    \quad\text{as }|x|\to\infty.
  \]
  In particular,
  \[
    u\in\mathrm H^s(\R^2;\R^2).
  \]

  For $\varepsilon>0$, set
  \begin{equation}\label{eq:vz}
    v_\varepsilon
    :=
    \nabla^\perp \mathcal C_\varepsilon(\psi),
    \qquad
    z_\varepsilon
    :=
    u-v_\varepsilon
    =
    \nabla^\perp\bigl(\psi-\mathcal C_\varepsilon(\psi)\bigr).
  \end{equation}
  Both \(v_\varepsilon\) and \(z_\varepsilon\) are divergence free. For every $c\in\R$, a Sobolev function satisfies
  \[
    \nabla\psi=0
    \qquad\text{a.e. on }\{\psi=c\}.
  \]
  The Sobolev chain rule therefore gives
  \begin{equation}\label{eq:vz-characteristic}
    v_\varepsilon
    =
    \mathds{1}_{\{|\psi|<\varepsilon\}}u,
    \qquad
    z_\varepsilon
    =
    \mathds{1}_{\{|\psi|>\varepsilon\}}u
    \qquad\text{a.e. in }\R^2.
  \end{equation}
  The level-set property removes the ambiguity on
  $\{\psi=\pm\varepsilon\}$.

  By \cref{lem:clamp},
  \begin{equation}\label{eq:v-uniform}
    \sup_{\varepsilon>0}
    \|v_\varepsilon\|_{\mathrm H^s}
    \leq
    C_s\sup_{\varepsilon>0}
    \|\mathcal C_\varepsilon(\psi)\|_{\mathrm H^{1+s}}
    \leq
    C_s\|\psi\|_{\mathrm H^{1+s}}
    <\infty.
  \end{equation}
  Consequently, $z_\varepsilon\in\mathrm H^s$.

  For each fixed $\varepsilon>0$, the decay of $\psi$ provides a radius
  $R>0$ such that
  \[
    |\psi(x)|<\varepsilon
    \qquad\text{whenever }|x|>R.
  \]
  Thus $\psi-\mathcal C_\varepsilon(\psi)$ vanishes outside $B_R$, and therefore
  $z_\varepsilon$ is compactly supported.

  \uline{Step 2:} \emph{A compactly supported divergence-free energy test.}
  Fix $\varepsilon>0$, let $\rho_\delta$ be a standard compactly supported
  mollifier, and set
  \[
    z_{\varepsilon,\delta}
    :=
    \rho_\delta*z_\varepsilon.
  \]
  Then
  \[
    z_{\varepsilon,\delta}\in\C_c^\infty(\R^2;\R^2),
    \qquad
    \operatorname{div} z_{\varepsilon,\delta}=0,
    \qquad
    z_{\varepsilon,\delta}\to  z_\varepsilon
    \quad\text{in }\mathrm H^s
  \]
  as $\delta\downarrow0$.  For $0<\delta<1$, the supports of
  $z_{\varepsilon,\delta}$ lie in one fixed compact set depending only
  on $\varepsilon$.

  Testing \cref{eq:NS} against $z_{\varepsilon,\delta}$ eliminates the
  pressure term.  The fractional term converges by $\mathrm H^s$ duality.
  The convection term also converges because $(u\cdot\nabla)u$ is smooth
  on the fixed compact set and
  $z_{\varepsilon,\delta}\to z_\varepsilon$ in $\mathrm L^2$.  Hence
  \begin{equation}\label{eq:test-z}
    \left\langle
    (-\Delta)^{s/2}u,
    (-\Delta)^{s/2}z_\varepsilon
    \right\rangle
    +
    \int_{\R^2}
    (u\cdot\nabla u)\cdot z_\varepsilon\,\, \mathrm{d} x
    =0.
  \end{equation}

  \uline{Step 3:} \emph{Exact cancellation of the convection term.}
  By \cref{eq:vz-characteristic},
  \[
    \int_{\R^2}
    (u\cdot\nabla u)\cdot z_\varepsilon\,\, \mathrm{d} x
    =
    \int_{\R^2}
    z_\varepsilon\cdot
    \nabla\left(\frac{|u|^2}{2}\right)\, \mathrm{d} x.
  \]
  Choose $\eta\in\C_c^\infty(\R^2)$ equal to one on a neighborhood of
  $\supp z_\varepsilon$.  Since $z_\varepsilon$ is compactly supported
  and divergence free in distributions,
  \[
    \begin{aligned}
      \int_{\R^2}
      z_\varepsilon\cdot
      \nabla\left(\frac{|u|^2}{2}\right)\, \mathrm{d} x
       & =
      \int_{\R^2}
      z_\varepsilon\cdot
      \nabla\left(\eta\frac{|u|^2}{2}\right)\, \mathrm{d} x \\
       & =
      -\left\langle
      \operatorname{div} z_\varepsilon,
      \eta\frac{|u|^2}{2}
      \right\rangle
      =0.
    \end{aligned}
  \]
From
  \cref{eq:test-z}, we obtain
  \begin{equation}\label{eq:orthogonality}
    \left\langle
    (-\Delta)^{s/2}u,
    (-\Delta)^{s/2}z_\varepsilon
    \right\rangle
    =0
    \qquad\text{for every }\varepsilon>0.
  \end{equation}

  \uline{Step 4:} \emph{Removal of the truncation.}
  By \cref{eq:vz-characteristic},
  $v_\varepsilon(x)\to0$ for a.e. $x\in\R^2$ as
  $\varepsilon\downarrow0$.  Indeed, this is immediate when
  $\psi(x)\neq0$, while on $\{\psi=0\}$ we have
  $u=\nabla^\perp\psi=0$ a.e. by the level-set property.
  Moreover,
  \(|v_\varepsilon|\leq|u|\) a.e. in $\mathbb R^2$.
  Since $u\in\mathrm L^2(\R^2;\R^2)$, Lebesgue's dominated convergence theorem gives
  \begin{equation}\label{eq:v-strong-L2}
    v_\varepsilon\to 0
    \qquad\text{strongly in }\mathrm L^2(\R^2;\R^2).
  \end{equation}
  Combining \cref{eq:v-uniform,eq:v-strong-L2}, we obtain
  \begin{equation}\label{eq:v-weak}
    v_\varepsilon\rightharpoonup0
    \qquad\text{weakly in }\mathrm H^s(\R^2;\R^2).
  \end{equation}
  Indeed, the uniform \(\mathrm H^s\)-bound gives weak subsequential
  compactness, and every weakly convergent subsequence has
  \(\mathrm L^2\)-weak limit zero by \cref{eq:v-strong-L2}.
 Thus zero is the only possible weak cluster point. If the full family did not converge weakly to zero, a sequence
  \(\varepsilon_k\downarrow0\) outside some weak neighborhood of zero would
  contradict this subsequence conclusion.

  Since $z_\varepsilon=u-v_\varepsilon$, it follows that
  $z_\varepsilon\rightharpoonup u$ weakly in $\mathrm H^s$.  Passing to
  the limit in \cref{eq:orthogonality} yields
  \(
  \|(-\Delta)^{s/2}u\|_{\mathrm L^2}^2 = 0.
  \)
  Since $u\in\mathrm L^2(\R^2;\R^2)$, Plancherel's theorem yields
  $u\equiv 0$ in $\mathbb R^2$.
\end{proof}

\section{\texorpdfstring{The annular cut-off argument for \(\frac13\leq s\leq\frac23\)}{The annular cut-off argument for 1/3 <= s <= 2/3}}
\label{sec:proof-sub}

The next proposition uses a direct annular cut-off argument.
For any \(0<s<1\), an additional \(L^r\)-bound with \(3\le r\le6\) forces triviality. In the range
\(\frac13\le s\le\frac23\), this bound follows directly from homogeneous
Sobolev embedding with \(r=q_s=2/(1-s)\). Thus the proposition proves the
\cref{thm:main-bounded} in that range and gives a second proof if 
\(\frac13\le s<\frac12\) already covered by \cref{sec:stream-range}.

\begin{proposition}\label{prop:low-range}
  Let \(0<s<1\), and let \((u,p)\) be a smooth solution of \cref{eq:NS} with
  \[
    u\in \Hdot^s(\R^2;\R^2).
  \]
  If \(u\in\mathrm L^r(\R^2;\R^2)\) for some \(3\le r\le6\), then
  \(u\equiv0\) and \(p\) is constant. In particular, the additional
  assumption is automatic when \(\frac13\le s\le\frac23\).
\end{proposition}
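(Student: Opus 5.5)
We test the equation against \(u\chi_R\), where \(\chi_R(x)=\chi(x/R)\) is a standard cutoff equal to \(1\) on \(B_R\), supported in \(B_{2R}\), with \(|\nabla\chi_R|\lesssim R^{-1}\) supported on the annulus \(A_R=B_{2R}\setminus B_R\). First, by \cref{lem:pressure-normalization} we replace \(p\) by \(p_0=\mathcal R_i\mathcal R_j(u_iu_j)\). Since \(u\in\mathrm L^r\), \cref{eq:CZ-pressure} gives \(p\in\mathrm L^{r/2}\) (note \(r/2>1\)). Since \(u\) is smooth, integrating by parts gives
\[
  \int (u\cdot\nabla u+\nabla p)\cdot u\chi_R\,\mathrm dx
  =-\int \Bigl(\tfrac12|u|^2+p\Bigr)\,u\cdot\nabla\chi_R\,\mathrm dx ,
\]
and therefore
\[
  \bigl\langle(-\Delta)^{s/2}u,(-\Delta)^{s/2}(u\chi_R)\bigr\rangle
  =\int_{A_R}\Bigl(\tfrac12|u|^2+p\Bigr)\,u\cdot\nabla\chi_R\,\mathrm dx .
\]
Before using this identity, we must justify that the pairing on the left is meaningful, i.e.\ that \(u\chi_R\in\Hdot^s\). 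This follows because \(u\chi_R=u-\eta_Ru\) with \(\eta_R=1-\chi_R\), and \(\eta_Ru\in\Hdot^s\) by \cref{lem:trunc}. We also need the smooth solution to be tested against the compactly supported (non-smooth-in-\(\Hdot^s\)-sense) field, which requires approximating \(u\chi_R\) by mollification with supports in a fixed ball, as in Step~2 of \cref{prop:stream-range}.

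\emph{Dissipative term.} Write \(\langle(-\Delta)^{s/2}u,(-\Delta)^{s/2}(u\chi_R)\rangle=[u]_{\Hdot^s}^2-\langle(-\Delta)^{s/2}u,(-\Delta)^{s/2}(\eta_Ru)\rangle\). By Cauchy--Schwarz and \cref{lem:trunc}, the last term is bounded by \([u]_{\Hdot^s}[\eta_Ru]_{\Hdot^s}\to0\). Hence the left-hand side tends to \([u]_{\Hdot^s}^2\).

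\emph{Boundary flux.} By H\"older on \(A_R\) with exponents \(r/3\) and its conjugate,
\[
  \Bigl|\int_{A_R}(|u|^2+|p|)|u||\nabla\chi_R|\Bigr|
  \lesssim R^{-1}|A_R|^{1-3/r}\bigl(\|u\|_{\mathrm L^r(A_R)}^3+\|p\|_{\mathrm L^{r/2}(A_R)}^{r/2\cdot 2/r}\|u\|_{\mathrm L^r(A_R)}\bigr)
  \lesssim R^{1-6/r}\bigl(\|u\|_{\mathrm L^r(A_R)}^2+\|p\|_{\mathrm L^{r/2}(A_R)}\bigr)\|u\|_{\mathrm L^r(A_R)} .
\]
This uses \(r\ge3\). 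Since \(r\le6\), we have \(R^{1-6/r}\le1\) for \(R\ge1\). Moreover, the annular norms of \(u\in\mathrm L^r\) and \(p\in\mathrm L^{r/2}\) tend to zero as tails of convergent integrals. Hence the flux tends to \(0\), so \([u]_{\Hdot^s}=0\). Since \(\Hdot^s\) is realized in \(\mathrm L^{q_s}\), \(u\) is then a constant lying in \(\mathrm L^{q_s}\), so \(u\equiv0\). The equation then gives \(\nabla p=0\). For the final assertion, when \(\frac13\le s\le\frac23\) we have \(q_s=2/(1-s)\in[3,6]\), and \cref{eq:SobolevEmbedding} supplies \(u\in\mathrm L^{q_s}\).

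The main technical obstacle is the justification of the tested identity. This requires showing that \(u\chi_R\) is an admissible test function, so that the fractional pairing equals \(\int(-\Delta)^su\cdot u\chi_R\) for the smooth solution, and that the pressure used is exactly the normalized one. I would handle this by mollifying \(u\chi_R\), testing the normalized equation \cref{eq:normalized-equation} in \(\mathcal S'\), and passing to the limit using \(\Hdot^s\) duality for the fractional term and local smoothness for the other terms. This gives the identity for every fixed \(R\).
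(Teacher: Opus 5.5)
Your proposal is correct and follows essentially the same route as the paper. You test the normalized equation against \(u\chi_R\), send the dissipative term to \([u]_{\Hdot^s}^2\) via \cref{lem:trunc}, and bound the Bernoulli flux \(\int_{A_R}(\tfrac12|u|^2+p)\,u\cdot\nabla\chi_R\,\mathrm dx\) by \(R^{1-6/r}\) times vanishing annular norms, which is exactly where \(3\le r\le6\) enters. One minor remark: the mollification step you propose is unnecessary, since for a smooth solution \(u\chi_R\) is already in \(\C_c^\infty\) and is therefore directly admissible in \cref{eq:normalized-equation}.
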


\begin{proof}
 Throughout the proof, \(p\) denotes the normalized pressure from
  \cref{lem:pressure-normalization}.
  
  \uline{Step 1.} \emph{Cutoff in the energy.} Let us fix $\chi\in \C_c^\infty(\R^2)$ such that
  \[
    0\le \chi\le1,
    \qquad \chi\equiv1\text{ in }B_1,
    \qquad \supp\chi\subset B_2,
    \qquad |\nabla\chi|\le C.
  \]
  We define $\chi_R(x)\coloneqq \chi(x/R)$, $A_R\coloneqq B_{2R}\setminus B_R$, and $\eta_R\coloneqq 1-\chi_R$.
  Testing the normalized equation \cref{eq:normalized-equation}
  against the compactly supported vector field $u\chi_R$ gives
  \begin{equation}\label{eq:test-identity}
    \langle (-\Delta)^s u, u\chi_R\rangle
    = -\int_{\R^2}(u\cdot\nabla u)\cdot(u\chi_R)\, \mathrm{d} x
    -\int_{\R^2}\nabla p\cdot(u\chi_R)\, \mathrm{d} x.
  \end{equation}
  Since $\operatorname{div} u=0$,
  \[
    \int_{\R^2}(u\cdot\nabla u)\cdot(u\chi_R)\, \mathrm{d} x
    = \frac12\int_{\R^2}u\cdot\nabla(|u|^2)\,\chi_R\, \mathrm{d} x
    = -\frac12\int_{\R^2}|u|^2\,u\cdot\nabla\chi_R\, \mathrm{d} x.
  \]
  Similarly,
  \[
    \int_{\R^2}\nabla p\cdot(u\chi_R)\, \mathrm{d} x
    = -\int_{\R^2}p\,u\cdot\nabla\chi_R\, \mathrm{d} x.
  \]
  Therefore
  \begin{equation}\label{eq:main-identity-R}
    \langle (-\Delta)^s u, u\chi_R\rangle
    = \frac12\int_{A_R}|u|^2\,u\cdot\nabla\chi_R\, \mathrm{d} x
    + \int_{A_R}p\,u\cdot\nabla\chi_R\, \mathrm{d} x.
  \end{equation}

  We first identify the limit of the left-hand side. Using the Fourier definition of $(-\Delta)^{s/2}$,
  \[
    \langle (-\Delta)^s u, u\chi_R\rangle
    = \int_{\R^2} (-\Delta)^{s/2}u \cdot (-\Delta)^{s/2}(u\chi_R)\, \mathrm{d} x.
  \]
  Since $u\chi_R = u-\eta_Ru$, \cref{lem:trunc} yields
  \[
    [\eta_Ru]_{\Hdot^s}\to0,
  \]
  and hence
  \begin{equation}\label{eq:left-limit}
    \langle (-\Delta)^s u, u\chi_R\rangle
    \to  [u]_{\Hdot^s(\R^2)}^2
    \qquad \text{as }R\to\infty.
  \end{equation}

  We now prove that the two terms on the right-hand side of \cref{eq:main-identity-R} vanish as $R\to\infty$.

  % We set $q\coloneqq q_s=2/(1-s)$, so that $u\in \mathrm{L}^q(\R^2)$ by \cref{eq:SobolevEmbedding}.

  %ALTERNATIVE; 

    \uline{Step 2.} \emph{The Bernoulli flux.}
  Set
  \[
    B\coloneqq p+\frac12|u|^2.
  \]
  Since \(u\in \mathrm L^r\), the Calder\'on--Zygmund estimate gives
  \(p\in \mathrm L^{r/2}\), and hence \(B\in \mathrm L^{r/2}\). The right-hand side of
  \cref{eq:main-identity-R} is the single flux
  \[
    \int_{A_R}B\,u\cdot\nabla\chi_R\,\mathrm dx.
  \]
  H\"older's inequality, \(|A_R|\lesssim R^2\), and
  \(|\nabla\chi_R|\lesssim R^{-1}\) give
  \begin{equation}\label{eq:annular-B-flux}
    \left|\int_{A_R}B\,u\cdot\nabla\chi_R\,\mathrm dx\right|
    \lesssim
    R^{1-6/r}\,
    \|B\|_{\mathrm L^{r/2}(A_R)}
    \|u\|_{\mathrm L^r(A_R)}.
  \end{equation}
  Because \(3\le r\le6\), the power of \(R\) is non-positive. Both annular
  norms tend to zero by absolute continuity of the corresponding global
  integrals, so the flux vanishes.

  Combining \cref{eq:main-identity-R}, \cref{eq:left-limit}, and
  \cref{eq:annular-B-flux} yields \([u]_{\Hdot^s}^2=0\). Hence \(u\) is
  constant, and its finite \(\mathrm L^r\)-norm forces \(u=0\). Returning to
  \cref{eq:NS} gives \(\nabla p=0\), so \(p\) is constant.

\end{proof}

\section{\texorpdfstring{Lorentz bootstrap and vorticity maximum
principle for \(\frac23<s<1\)}{Lorentz bootstrap and vorticity maximum principle for 2/3 < s < 1}}
\label{sec:proof-super}

For \(s>\frac23\), the annular estimate no longer closes. We instead bootstrap the projected equation to obtain boundedness and decay, then apply the nonlocal maximum principle to the vorticity. 

\begin{proposition}\label{prop:high-range}
  Let $\frac23<s<1$ and let $(u,p)$ be a smooth solution of \cref{eq:NS} with
  \[
    u\in \Hdot^s(\R^2;\R^2).
  \]
  Then $u\equiv0$ and $p$ is constant.
\end{proposition}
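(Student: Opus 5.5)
We follow the outline from the introduction: first bootstrap to boundedness, then to $\C_b^{1,\gamma}$ regularity and decay, and finally apply a maximum principle to the vorticity. Throughout, $\alpha=2s-1\in(\frac13,1)$ and $q_s=2/(1-s)>6$, and we work with the identity $u=-T(u\otimes u)$ modulo polynomials from \cref{eq:u-equals-T}.

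\emph{Step 1: boundedness by a Lorentz bootstrap.} By \cref{eq:SobolevEmbedding} we have $u\in\mathrm L^{q_s,2}$. The Lorentz H\"older inequality \cref{eq:lorentz-holder} then gives $u\otimes u\in\mathrm L^{q_s/2,1}$. Suppose $u\in\mathrm L^{q,2}$ with $q/2<2/\alpha$. \cref{lem:mapping-T}(a) gives $T(u\otimes u)\in\mathrm L^{n,1}$ with $\frac1n=\frac2q-\frac\alpha2$. \cref{lem:polynomial-remainder} (in its $\mathrm L^r$ version) kills the polynomial, so $u\in\mathrm L^{n,1}\subset\mathrm L^{n,2}$. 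Interpolating with the original information keeps $u\in\mathrm L^{q_s}$.

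The map $\frac1q\mapsto\frac2q-\frac\alpha2$ has fixed point $\frac\alpha2=s-\frac12$. At the start, $\frac1{q_s}=\frac{1-s}2$, and $\frac{1-s}{2}<s-\frac12$ holds exactly when $s>\frac23$. This is where the hypothesis enters: the exponent $\frac1q$ decreases geometrically away from the fixed point. After finitely many steps we reach $q/2\ge 2/\alpha$. If we land on the endpoint $q/2=2/\alpha$, \cref{lem:mapping-T}(b) gives $\mathrm L^\infty$. If we overshoot, we lower the exponent slightly by interpolation, or use (c) to get a H\"older bound on $T(u\otimes u)$ and then \cref{lem:polynomial-remainder}. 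Either way, $u\in\mathrm L^\infty\cap\Cdot^{0,\theta}$. I expect this bookkeeping to be the main obstacle: the endpoint, the polynomial removal at each stage, and the exact threshold $s>\frac23$.

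\emph{Step 2: higher regularity and decay.} Since $u\in\mathrm C_b^{0,\theta}$, also $F=u\otimes u\in\mathrm C_b^{0,\theta}$. We iterate \cref{lem:mapping-T}(d), gaining $\alpha$ at each step while avoiding the integer value $\beta+\alpha=1$. After finitely many steps, $u\in\mathrm C_b^{1,\gamma}$. Now $u\in\mathrm L^{q_s}$ and $u$ is uniformly continuous, so \cref{lem:decay-uc}(a) gives $u\to0$ at infinity. Since $\nabla u$ is H\"older, hence uniformly continuous, \cref{lem:decay-uc}(b) gives $\nabla u\to0$.

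\emph{Step 3: vorticity.} Let $\omega=\partial_1u_2-\partial_2u_1$. Taking the curl of \cref{eq:NS} gives $u\cdot\nabla\omega+(-\Delta)^s\omega=0$. Smoothness is given, but the fractional Laplacian requires $\omega$ to be bounded, which Step 2 supplies. Moreover $\omega$ is continuous and tends to zero at infinity.

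Suppose $\sup\omega>0$. Then the supremum is attained at some $x_0$, where $\nabla\omega(x_0)=0$. The singular-integral formula gives $(-\Delta)^s\omega(x_0)>0$ unless $\omega$ is constant, and a nonzero constant contradicts the decay. This contradicts the equation, so $\omega\le0$. The same argument applied to $-\omega$ gives $\omega\equiv0$.

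Hence $u$ is both curl-free and divergence-free, so each component is harmonic. A bounded harmonic function is constant, and the decay forces $u\equiv0$. The equation then gives $\nabla p=0$, so $p$ is constant.
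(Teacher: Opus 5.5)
Your proposal is correct and follows essentially the same route as the paper: the Lorentz bootstrap driven by the recurrence $a_{n+1}=2a_n-\alpha/2$, which is where $s>\frac23$ enters, then a H\"older bootstrap via \cref{lem:mapping-T}(c)--(d) up to $\mathrm C_b^{1,\gamma}$, decay from \cref{lem:decay-uc}, and finally the nonlocal maximum principle for the vorticity followed by the Liouville theorem for harmonic functions. Two points are left implicit in your Step 1, both routine: in the endpoint case you need one more application of \cref{lem:mapping-T}(c), with $u\otimes u\in\mathrm L^m$ for large $m$, to obtain a positive H\"older exponent; and in the above-endpoint case boundedness follows from uniform continuity together with $u\in\mathrm L^{q}$ via \cref{lem:decay-uc}(a), exactly as the paper does.
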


\begin{proof}
  We divide the proof into four steps.

  \uline{Step 1.} \emph{Bootstrap to boundedness.} This step is partly inspired by \cite{MR2763339,MR4813662}.
  We set
  \[
    \alpha \coloneqq  2s-1 \in \Bigl(\frac13,1\Bigr),
    \qquad q_0 \coloneqq q_s = \frac{2}{1-s}.
  \]
  By \cref{eq:SobolevEmbedding}, we have
  \[
    u\in \mathrm{L}^{q_0,2}(\R^2)
    \subset \mathrm L^{q_0}(\R^2),
  \]
  where the inclusion follows from \cite[eq.~(1.8)]{MR223874}.

  Suppose that
  \[
    u\in \mathrm{L}^{q_n,2}(\R^2)
  \]
  for some $n\ge0$. Then \cref{eq:lorentz-holder} yields
  \[
    u\otimes u\in \mathrm{L}^{q_n/2,1}(\R^2).
  \]
  Since $q_n>2$, we also have
  \[
    \mathrm L^{q_n,2}(\R^2)\subset\mathrm L^{q_n}(\R^2).
  \]
  If $q_n/2<2/\alpha$, \cref{lem:mapping-T}(a) gives
  \[
    T(u\otimes u)\in\mathrm L^{q_{n+1},1}(\R^2),
    \qquad
    \frac1{q_{n+1}}=\frac2{q_n}-\frac\alpha2.
  \]
Since \(a_0<\alpha/2\) and
\(a_{n+1}=2a_n-\alpha/2<a_n\) at every such step, induction gives
\(q_{n+1}>q_n\ge q_0>2\).
  Since $u\in\mathrm L^{q_n}(\R^2)$ and
  $T(u\otimes u)\in\mathrm L^{q_{n+1}}(\R^2)$, the
  finite-integrability case of \cref{lem:polynomial-remainder} shows that the
  polynomial in \cref{eq:u-equals-T} vanishes. Consequently,
  \[
    u\in\mathrm L^{q_{n+1},1}(\R^2)
    \subset\mathrm L^{q_{n+1},2}(\R^2),
  \]
  where the inclusion follows from \cite[(1.8)]{MR223874}.

  If $q_n/2=2/\alpha$, then \cref{lem:mapping-T}(b) yields
  \[
    T(u\otimes u)\in \mathrm{L}^\infty(\R^2).
  \]
  Since $u+T(u\otimes u)$ is a polynomial and
  $u\in\mathrm L^{q_n}(\R^2)$, \cref{lem:polynomial-remainder} implies that
  this polynomial is constant.
  Consequently,
  \begin{equation}\label{eq:uinfty}
    u\in \mathrm{L}^\infty(\R^2).
  \end{equation}

  If $q_n/2>2/\alpha$, then
  $u\otimes u\in \mathrm{L}^{q_n/2}(\R^2)$, so
  \cref{lem:mapping-T}(c) yields
  \[
    [T(u\otimes u)]_{\Cdot^{0,\alpha-4/q_n}}
    \lesssim
    \|u\otimes u\|_{\mathrm L^{q_n/2}}.
  \]
  Notice that this gives only a homogeneous H\"older bound, not an
  $\mathrm L^\infty$ bound. However, since
  \[
    u+T(u\otimes u)\in\mathcal P
  \]
  and $u\in\mathrm L^{q_n}(\R^2)$, \cref{lem:polynomial-remainder} implies that
  the polynomial remainder is constant. Therefore
  \[
    u\in \Cdot^{0,\alpha-4/q_n}(\R^2).
  \]
  In particular, $u$ is uniformly continuous. Since also
  $u\in\mathrm L^{q_n}(\R^2)$, \cref{lem:decay-uc}(a) gives
  $u(x)\to0$ as $|x|\to\infty$. Thus $u$ is bounded, and
  \cref{eq:uinfty} follows.

  It remains to see that one of the two latter alternatives is reached after
  finitely many subcritical steps. Let $a_n\coloneqq1/q_n$. As long as
  $q_n/2<2/\alpha$, the recurrence reads
  \[
    a_{n+1} = 2a_n - \frac\alpha2.
  \]
  Now
  \[
    a_0 = \frac{1-s}{2}
    < \frac{2s-1}{2} = \frac\alpha2
    \qquad \text{since }s>\frac23,
  \]
  so the sequence $a_n$ is strictly decreasing during the subcritical
  iteration. If the subcritical condition held at every step, then the
  recurrence would give
  \[
    a_n = \frac\alpha2 + 2^n\Bigl(a_0-\frac\alpha2\Bigr) \to -\infty,
  \]
  contradicting $a_n>\alpha/4$. Hence there is a first index $n_0$ such that
  \[
    a_{n_0}\le\frac\alpha4.
  \]
  If $n_0\ge1$, then
  $a_{n_0}=2a_{n_0-1}-\alpha/2>0$, so $q_{n_0}$ is still a finite positive
  exponent and $q_{n_0}/2\ge2/\alpha$.
  The endpoint case is handled by \cref{lem:mapping-T}(b), and the strict
  above-endpoint case is handled by \cref{lem:mapping-T}(c), together with
  \cref{lem:polynomial-remainder} and \cref{lem:decay-uc}(a).
  Therefore \cref{eq:uinfty} holds.

  \uline{Step 2.} \emph{Bootstrap to $\mathrm C^{1,\gamma}$.}
  We first obtain a positive H\"older exponent. Choose
  \[
    \delta\in\bigl(\max\{0,1-2\alpha\},\alpha\bigr)
    \qquad\text{with}\qquad
    \delta+\alpha\ne1.
  \]
  The interval is nonempty since $\alpha>1/3$, or equivalently $s>2/3$.
  Since $u\in \mathrm L^\infty(\R^2)\cap \mathrm L^{q_0}(\R^2)$, we have
  \[
    u\otimes u\in \mathrm L^m(\R^2)
  \]
  for all sufficiently large finite $m$. Taking $m$ so large that
  \[
    \alpha-\frac{2}m>\delta,
  \]
  \cref{lem:mapping-T}(c) and \cref{eq:u-equals-T} give
  \[
    T(u\otimes u)\in \Cdot^{0,\theta}(\R^2)
    \qquad\text{for some }\theta>\delta.
  \]
  Since $u\in \mathrm L^{q_0}(\R^2)$, \cref{lem:polynomial-remainder} again shows
  that the polynomial remainder in \cref{eq:u-equals-T} is constant. Hence
  \[
    u\in \Cdot^{0,\theta}(\R^2).
  \]
  Together with $u\in\mathrm L^\infty(\R^2)$, this implies
  \[
    u\in \mathrm C_b^{0,\delta}(\R^2).
  \]
  Therefore
  \[
    u\otimes u\in \mathrm C_b^{0,\delta}(\R^2).
  \]

  If $\delta+\alpha>1$, then \cref{lem:mapping-T}(d), applied to
  \cref{eq:u-equals-T}, gives
  \[
    u\in \mathrm C_b^{1,\delta+\alpha-1}(\R^2).
  \]

  If $\delta+\alpha<1$, then one application of \cref{lem:mapping-T}(d) gives
  \[
    u\in \mathrm C_b^{0,\delta+\alpha}(\R^2).
  \]
  Hence
  \[
    u\otimes u\in \mathrm C_b^{0,\delta+\alpha}(\R^2).
  \]
  Since our choice of $\delta$ gives
  \[
    \delta+2\alpha>1,
  \]
  a second application of \cref{lem:mapping-T}(d) gives
  \[
    u\in \mathrm C_b^{1,\delta+2\alpha-1}(\R^2).
  \]

  In both cases, there exists $\gamma>0$ such that
  \begin{equation}\label{eq:C1gamma}
    u\in \mathrm C_b^{1,\gamma}(\R^2).
  \end{equation}
  In particular, $u$ and $\nabla u$ are uniformly continuous on $\R^2$.

  \uline{Step 3.} \emph{Decay at infinity.}
  Since $u\in \mathrm{L}^{q_0}(\R^2)\cap \mathrm{UC}(\R^2;\R^2)$, \cref{lem:decay-uc}(a) gives
  \begin{equation}\label{eq:u-decay}
    u(x)\to0
    \qquad \text{as }|x|\to\infty.
  \end{equation}
  Applying \cref{lem:decay-uc}(b) to $f=u$, and using \cref{eq:C1gamma}, we also obtain
  \begin{equation}\label{eq:grad-decay}
    \nabla u(x)\to0
    \qquad \text{as }|x|\to\infty.
  \end{equation}
  Hence the vorticity $\omega=\operatorname{curl} u$ satisfies
  \begin{equation}\label{eq:omega-decay}
    \omega(x)\to0
    \qquad \text{as }|x|\to\infty.
  \end{equation}

  \uline{Step 4.} \emph{Maximum principle for the vorticity.}
  Taking the curl of \cref{eq:NS}, we obtain the scalar equation
  \[
    (-\Delta)^s\omega + u\cdot\nabla\omega =0
    \qquad \text{in }\R^2.
  \]
  By \cref{eq:C1gamma}, the function $\omega$ is continuous and bounded, and by \cref{eq:omega-decay}, $\omega(x)\to0$ as $|x|\to\infty$. In particular, any strictly positive supremum of $\omega$ is attained.

  Let us assume that $M\coloneqq \max_{\R^2}\omega>0$, and let $x_0$ be a point such that $\omega(x_0)=M$. Then, by smoothness,
  $\nabla\omega(x_0)=0$, and therefore
  \[
    0 = (-\Delta)^s\omega(x_0)+u(x_0)\cdot\nabla\omega(x_0)
    = (-\Delta)^s\omega(x_0).
  \]
  On the other hand, because $x_0$ is a global maximum point of the smooth,
  bounded function $\omega$, the integrand below is non-negative and integrable.
  Indeed, smoothness and $\nabla\omega(x_0)=0$ give local integrability because
  $s<1$, while boundedness gives integrability at infinity. Thus the principal
  value is an ordinary integral:
  \[
    (-\Delta)^s\omega(x_0)
    = c_{2,s}\int_{\R^2}\frac{\omega(x_0)-\omega(y)}{|x_0-y|^{2+2s}}\, \mathrm{d} y
    \ge 0.
  \]
  Since the integral is both non-negative and equal to zero, we must have $\omega(y)=M$ for a.e.~$y\in\R^2$. By continuity, $\omega\equiv M$, which contradicts \cref{eq:omega-decay}. Therefore $\max\omega\le0$.
  Applying the same argument to $-\omega$ yields $\min\omega\ge0$. Hence
  \[
    \omega\equiv0.
  \]
  Thus $u$ is both divergence-free and irrotational. In dimension two this implies that each component of $u$ is harmonic. Since $u\in \mathrm{L}^{q_0}(\R^2)$ with $q_0<\infty$, the classical Liouville theorem for harmonic functions yields $u\equiv0$.
\end{proof}

\begin{remark}
  We have shown that if $\frac23<s<1$ and $(u,p)$ is a smooth solution of
  \cref{eq:NS} with $u\in\Hdot^s(\R^2;\R^2)$, then
  $u\in\mathrm L^\infty(\R^2;\R^2)$.
  In this case one also obtains
  \[
    \nabla u\in \mathrm L^{2}(\R^2).
  \]
  Indeed, from the projected equation
  \[
    (-\Delta)^s u=-\PP\operatorname{div}(u\otimes u),
  \]
  taking Fourier transforms and multiplying by $|\xi|^{s-1}$ for
  $\xi\ne0$, we obtain
  \[
    (-\Delta)^{\frac{3s}{2}-\frac{1}{2}}u
    =
    -(-\Delta)^{\frac{s}{2}}
    (-\Delta)^{-\frac{1}{2}}\PP\operatorname{div}(u\otimes u)
    \qquad\text{on }\R^2\setminus\{0\}.
  \]
  The operator
  $(-\Delta)^{-1/2}\PP\operatorname{div}$ is a Calder\'on--Zygmund operator of order
  zero. The fractional Leibniz rule (see \cite{MR4333988}) therefore gives
  \[
    \begin{aligned}
      \bigl\|(-\Delta)^{\frac{3s}{2}-\frac{1}{2}}u\bigr\|_{\mathrm L^2}
       & \lesssim
      \bigl\|(-\Delta)^{\frac{s}{2}}(u\otimes u)\bigr\|_{\mathrm L^2}
      \\
       & \lesssim
      \|u\|_{\mathrm L^\infty}
      \bigl\|(-\Delta)^{\frac{s}{2}}u\bigr\|_{\mathrm L^2}
      <\infty.
    \end{aligned}
  \]
  Thus the identity away from the origin defines an $\mathrm L^2$ representative
  of $(-\Delta)^{(3s-1)/2}u$; the possible polynomial ambiguity is excluded by
  $u\in\mathrm L^{q_0}(\R^2)$. Therefore
  \[
    u\in \Hdot^{3s-1}(\R^2).
  \]
  Since $\frac23<s<1$, we have $s<1<3s-1$. Interpolating between
  $\Hdot^s$ and $\Hdot^{3s-1}$ (see, for example,
  \cite[Proposition~1.32]{CDBOOK}) gives
  \[
    \|\nabla u\|_{\mathrm L^2}
    \lesssim
    \bigl\|(-\Delta)^{\frac{3s}{2}-\frac{1}{2}}u\bigr\|_{\mathrm L^2}^{\frac{1-s}{2s-1}}
    \bigl\|(-\Delta)^{\frac{s}{2}}u\bigr\|_{\mathrm L^2}^{\frac{3s-2}{2s-1}}
    <\infty.
  \]
  This implies $\omega\in\mathrm L^2(\R^2)$. Let
  $\omega_\varepsilon=\rho_\varepsilon*\omega$, where
  $\rho_\varepsilon$ is a standard mollifier, and set
  \[
    r_\varepsilon
    \coloneqq
    u\cdot\nabla\omega_\varepsilon
    -\rho_\varepsilon*(u\cdot\nabla\omega).
  \]
  Since \cref{eq:C1gamma} implies
  $u\in\mathrm W^{1,\infty}(\R^2)$, the Friedrichs commutator lemma gives
  \[
    r_\varepsilon\to0
    \qquad\text{in }\mathrm L^2(\R^2).
  \]
  Mollifying the vorticity equation yields
  \[
    (-\Delta)^s\omega_\varepsilon
    +u\cdot\nabla\omega_\varepsilon
    =r_\varepsilon.
  \]
  Testing this equation with $\omega_\varepsilon$ and using
  $\operatorname{div}u=0$, justified by a spatial cutoff, gives
  \[
    \|(-\Delta)^{s/2}\omega_\varepsilon\|_{\mathrm L^2}^2
    =
    \int_{\R^2}r_\varepsilon\omega_\varepsilon\,\, \mathrm{d} x
    \to0.
  \]
  Fatou's lemma in Fourier space now gives
  $(-\Delta)^{s/2}\omega=0$. Since
  $\omega\in\mathrm L^2(\R^2)$, it follows that $\omega=0$. We conclude as
  in the proof of \cref{prop:high-range}.
\end{remark}

\section{Proof of \texorpdfstring{\cref{thm:main-bounded}}{the Liouville theorem in case 0 < s < 1}}
\label{sec:proofs}

Putting the arguments of \crefrange{sec:stream-range}{sec:proof-super} together, we can now prove \cref{thm:main-bounded}.

\begin{proof}[Proof of \cref{thm:main-bounded}]
  By \cref{prop:stream-range,prop:low-range,prop:high-range}, respectively,
  \(u=0\) for \(0<s<\frac12\), for
  \(\frac13\le s\le\frac23\) (where \(u\in L^{q_s}\) and
  \(3\le q_s\le6\)), and for \(\frac23<s<1\). These overlapping ranges
  cover \((0,1)\). Substitution into \eqref{eq:NS} gives \(\nabla p=0\), so
  \(p\) is constant.
\end{proof}

\section{Proof of \texorpdfstring{\cref{thm:damped-euler}}{the Liouville theorem in case s = 0}}
\label{sec:euler}

The proof of \cref{thm:damped-euler} relies on a Bernoulli identity with a
non-negative dissipation term. Testing this identity against expanding
cutoffs, the assumption \(u\in\mathrm L^2(\R^2)\) makes the boundary term
vanish and forces \(u\equiv0\).

\begin{proof}
  \uline{Step 1.} \emph{The dissipative Bernoulli identity.}
  Introduce the Bernoulli function
  \[
    B\coloneqq p+\frac12|u|^2.
  \]
  Taking the scalar product of the momentum equation in
  \cref{eq:euler} with \(u\) gives
  \begin{equation}\label{eq:damped-bernoulli}
    u\cdot\nabla B=-|u|^2.
  \end{equation}
  Since \(\operatorname{div} u=0\), the chain rule and
  \cref{eq:damped-bernoulli} yield
  \begin{equation}\label{eq:damped-bernoulli-div}
    \operatorname{div}\bigl(u\,\arctan(B)\bigr)
    =
    -\frac{|u|^2}{1+B^2}.
  \end{equation}

  \uline{Step 2.} \emph{The flux vanishes at infinity.}
  \(\chi=1\) on \(B_1\) and \(\supp\chi\subset B_2\), and set
  \(\chi_R(x)=\chi(x/R)\) and \(A_R=B_{2R}\setminus B_R\).
  Testing \cref{eq:damped-bernoulli-div} against \(\chi_R\) gives
  \begin{equation}\label{eq:damped-euler-test}
    0\le
    \int_{\R^2}\chi_R\frac{|u|^2}{1+B^2}\,\mathrm dx
    =
    \int_{\R^2}\arctan(B)\,u\cdot\nabla\chi_R\,\mathrm dx
    \le
    \left|
    \int_{\R^2}\arctan(B)\,u\cdot\nabla\chi_R\,\mathrm dx
    \right|
    \le
    \frac{C}{R}\int_{A_R}|u|\,\mathrm dx .
  \end{equation}
  By \cref{eq:damped-annular-condition}, there is a sequence
  \(R_k\to\infty\) along which the right-hand side tends to zero. Since
  \(\chi_{R_k}\to1\) pointwise and the integrands on the left are
  nonnegative, Fatou's lemma gives
  \[
    0\le
    \int_{\R^2}\frac{|u|^2}{1+B^2}\,\mathrm dx
    \le
    \liminf_{k\to\infty}
    \int_{\R^2}\chi_{R_k}\frac{|u|^2}{1+B^2}\,\mathrm dx
    =0.
  \]
  Hence \(u=0\), and \cref{eq:euler} gives \(\nabla p=0\), so \(p\) is
  constant.

  Finally, if \(u\in L^r(\R^2)\) for some \(1\le r\le2\), then
  \[
    \frac1R\int_{A_R}|u|\,\mathrm dx
    \lesssim R^{1-2/r}\|u\|_{L^r(A_R)}
    \to 0.
  \]
\end{proof}

\vspace{0.5cm}
\section*{Acknowledgments}

N.~De Nitti is a member of the Gruppo Nazionale per l'Analisi Matematica, la Probabilità e le loro Applicazioni (GNAMPA) of the Istituto Nazionale di Alta Matematica (INdAM) and has received support from the INdAM--GNAMPA Project 2026 \textit{Modelli Non-locali in Fluidodinamica, Traffico ed Elasticità} (CUP:~E53C25002010001).

He is grateful to X.~Fernández-Real, F.~Hounkpe, and S.~Schulz for helpful conversations on topics related to this work.

L.~Niebel is funded
by the Deutsche Forschungsgemeinschaft (DFG, German Research Foundation) under Germany's
Excellence Strategy EXC 2044/2--390685587, Mathematics M\"unster: Dynamics--Geometry--Structure.

J.~Yang is funded by the National Natural Science Foundation of China (NSFC) under grant 12471225, and  Natural Science Basic Research Program of Shaanxi (Program No.~2026JC-YXQN-021).

\vspace{0.5cm}

\printbibliography

\vfill

\end{document}